\begin{document}

\title{Bootstrapping partition regularity of linear systems}

\author{\tsname}
\address{\tsaddress}
\email{\tsemail}

\begin{abstract}
Suppose that $A$ is a $k \times d$ matrix of integers and write $\mathfrak{R}_A:\N \rightarrow \N\cup \{ \infty\}$ for the function taking $r$ to the largest $N$ such that there is an $r$-colouring $\mathcal{C}$ of $[N]$ with $\bigcup_{C \in \mathcal{C}}{C^d}\cap \ker A =\emptyset$.  We show that if $\mathfrak{R}_A(r)<\infty$ for all $r \in \N$ then $\mathfrak{R}_A(r) \leq \exp (\exp(r^{O_{A}(1)}))$ for all $r \geq 2$.

When the kernel of $A$ consists only of Brauer configurations -- that is vectors of the form $(y,x,x+y,\dots,x+(d-2)y)$ -- the above has been proved by Chapman and Prendiville with good bounds on the $O_A(1)$ term.
\end{abstract}

\maketitle

\section{Introduction}

Our work concerns colourings.  For a set $X$ and natural $r$ we say that $\mathcal{C}$ is an \textbf{$r$-colouring of $X$} if $\mathcal{C}$ is a cover of $X$ \emph{i.e.} $X \subset \bigcup_{C \in \mathcal{C}}{C}$, and $\mathcal{C}$ has size $r$.  In particular we shall not need our colours to be disjoint, though such colourings are included.

Suppose that $A$ is a $k\times d$ matrix of integers.  We write $\mathfrak{R}_A:\N \rightarrow \N \cup \{\infty\}$ for the function taking $r$ to the largest $N$ such that there is an $r$-colouring $\mathcal{C}$ of $[N]:=\{1,\dots,N\}$ with $\bigcup_{C \in \mathcal{C}}{C^d}\cap \ker A =\emptyset$ -- in words, such that there are no monochromatic solutions to $Ax=0$.  Note that the function $\mathfrak{R}_A$ is monotonically increasing.

Not all matrices $A$ have $\mathfrak{R}_A(r)<\infty$ for all $r \in \N$ (\emph{e.g.} if all the non-zero terms in $A$ are positive), but those that do we call \textbf{partition regular}.  There are matrices $A$ such that van der Waerden's theorem \cite[Exercise 6.3.7]{taovu::} (first proved in \cite{van::0}) is implied by the partition regularity of $A$ (see \cite[Satz I]{rad::1}), and similarly for Schur's Theorem \cite[6.12]{taovu::} (first proved in \cite{sch::4}).  Schur's theorem actually gives the stronger fact that\footnote{It is a result of Abbott and Moser \cite{abbmos::} that we cannot do much better.} $\mathfrak{R}_A(r) \leq \lfloor e r!\rfloor$, and since the celebrated work of Gowers \cite{gow::4,gow::0} we know that van der Waerden's theorem also has reasonable bounds in terms of the number of colours.

It is the purpose of this paper to use Gowers' work to show the following.
\begin{theorem}\label{thm.mn}
Suppose that $A$ is a $k\times d$ integer-valued partition regular matrix and $r\geq 2$ is natural.  Then there is some $N \leq \exp(\exp(r^{O_{A}(1)}))$ such that any $r$-colouring of $[N]$ contains a colour class $C$ and some $x \in C^d$ such that $Ax=0$.
\end{theorem}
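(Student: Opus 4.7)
The strategy is to reduce the quantitative problem to Chapman-Prendiville's bound for Brauer configurations via Rado's structural characterisation of partition regularity.  Since $A$ is partition regular, Rado's theorem furnishes the \textbf{columns condition}: there is a partition $[d] = B_1 \sqcup \cdots \sqcup B_m$ of the columns $c_1, \dots, c_d$ of $A$ such that $\sum_{i \in B_1} c_i = 0$ and, for each $j > 1$, the sum $\sum_{i \in B_j} c_i$ is a rational combination of the preceding columns.  Both $m$ and the integer denominators appearing here depend on $A$ alone.

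This structure supplies an explicit $A$-parametrisation: every intended solution coordinate $x_i$ can be written as $\sum_{j=1}^{m} \alpha_{ij} y_j$ for integers $\alpha_{ij} = O_A(1)$, where $y_1, \dots, y_m$ are free parameters.  Consequently, a monochromatic $A$-solution can be extracted from any monochromatic instance of a \emph{Rado configuration} $\mathcal{P}_A(y_1, \dots, y_m)$, a finite set of $O_A(1)$ bounded integer combinations of the $y_j$ determined by $A$ (essentially a Deuber-type set tailored to $A$).  It therefore suffices to prove a quantitative Deuber-type theorem: for some $N \leq \exp(\exp(r^{O_A(1)}))$, every $r$-colouring of $[N]$ contains a monochromatic $\mathcal{P}_A$.

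This last step is the heart of the proof and where I expect the main obstacle.  The naive plan --- apply Chapman-Prendiville once per parameter $y_j$ and nest the resulting Brauer configurations --- costs an $\exp \exp$ factor at each of the $m$ levels and produces a tower of height $m$, not a double exponential.  Two routes around this seem plausible.  Route (a): show that any monochromatic Brauer configuration of length $L = L(A)$ already contains a copy of $\mathcal{P}_A$, so that a single application of Chapman-Prendiville suffices; this would be combinatorially cleanest but requires a nontrivial embedding result that I would not expect to hold in general once $m \geq 2$.  Route (b): open up the Fourier/uniformity-norm density-increment argument underlying Chapman-Prendiville and adapt it to locate the entire $m$-parameter configuration $\mathcal{P}_A$ simultaneously, so that all double-exponential losses occur inside a single application of Gowers' machinery.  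Route (b) is technically heavier but the one I would expect to succeed in full generality, and it is consistent with the suggestion of the paper's title that a single ``bootstrap'' from qualitative to quantitative partition regularity, rather than an iteration over the Rado hierarchy, is what drives the argument.
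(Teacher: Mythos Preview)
Your high-level reduction is exactly the paper's: Rado's columns condition plus Deuber's link between that condition and $(m,p,c)$-sets reduces Theorem~\ref{thm.mn} to a quantitative Deuber theorem (the paper's Theorem~\ref{thm.main}), and your Route~(b) --- adapting the Gowers-norm density-increment machinery to find the full multi-parameter Deuber configuration in one pass --- is precisely the paper's method. Route~(a) is rightly dismissed.

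That said, what you have written is a plan, not a proof: the entire technical content of the paper is the execution of Route~(b), and your proposal stops exactly where the work begins. Concretely, the paper proves Theorem~\ref{thm.main} by a \emph{double} induction: an outer induction on the Deuber parameter $m$, and at each level an inner density-increment loop. The engine is a dichotomy lemma (Lemma~\ref{lem.count}): either the $(m,p,c)$-count on a colour class factors, up to a controllable error, as $\alpha^{|\mathcal{D}_{p,c;m}|}$ times the $(m{-}1,p,c)$-count (so the inductive hypothesis gives a good lower bound), or some balanced function of the colour class has large $U^k$-norm, and Gowers' inverse theorem yields a density increment on a long subprogression. The inner loop (Lemma~\ref{lem.iil} and the proof of Theorem~\ref{thm.key}) is driven not by the density of a single colour class but by the \emph{sum over all colour classes} of the maximum density on translates of the current progression; this quantity is bounded above by $r$ and rises by $\Omega(\delta)$ per increment step, forcing termination in $O(r\delta^{-1})$ steps. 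The double exponential in $r$ then emerges from composing the per-step exponential loss of Gowers' inverse theorem across this bounded number of iterations, uniformly in $m$. None of this architecture --- the factoring dichotomy, the colour-summed potential, or the two-layer iteration --- appears in your proposal, and without it the bound is not established.
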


The basic method is expounded in the model setting\footnote{The model setting has proved very fruitful for distilling the important aspects of arguments in additive combinatorics.  See the paper \cite{gre::9} and the sequel \cite{wol::3}.} of $\F_2^n$ by Shkredov in \cite[Theorem 24]{shk::7} for the purpose of illustrating how analytic techniques can be applied to colouring results.  Chapman and Prendiville in \cite{chapre::} independently discovered the argument given in \cite[Theorem 24]{shk::7} (though with some technical differences around expansion vs large Fourier coefficients) and importantly showed how it could be applied to provide good bounds in colouring problems in the integers where none were previously known.  Specifically in \cite[Theorem 1.1]{chapre::} they prove Theorem \ref{thm.mn} for Brauer configurations, meaning for a matrix $A$ whose kernel is the set of vectors of the form $(y,x,x+y,\dots,x+(d-2)y)$ for some fixed $d \geq 3$, with a doubly exponential bound on $d$ in place of the $O_A(1)$ term.  (They also show in \cite[Theorem 1.2]{chapre::} that one may replace the $O_A(1)$ term by $1+o(1)$ for Brauer configurations with $d=4$.)

It is the purpose of this note to extend the arguments of Chapman and Prendiville to partition regular linear systems.  This entails a large notational burden and as a result, while they are able to give rather good estimates for the $O_A(1)$-term when $A$ is a matrix corresponding to a Brauer configuration, we give no meaningful estimates.\footnote{Though see the remark after the proof of Theorem \ref{thm.main}.}

The above work comes on the back of a wave of investigations using analytic techniques for colouring problems.  This really took off with the paper \cite{cwasch::0} of Cwalina and Schoen, and was followed by the work of Green and collaborators \cite{grelin::,gresan::1}, then Chow, Lindqvist and Prendiville \cite{cholinpre::}, and most recently Chapman \cite{cha::6} which inspired this particular paper.

One would often like to insist that the $x$ found in Theorem \ref{thm.mn} is in a certain sense non-degenerate.  The extent to which this is possible varies, but the question has been dealt with comprehensively by Hindman and Leader in \cite{hinlea::}.  See also \cite{fragrarod::0} for a related supersaturated formulation.

\subsection*{Existing bounds on the Rado numbers $\mathfrak{R}_A(r)$}  Other than the aforementioned \cite[Theorems 1.1 \& 1.2]{chapre::} most work has focused on the case where $A$ has one row \emph{i.e.} systems with one equation which for clarity we write in the comma-delimited form $A=(a_1,\dots,a_k)$.  In this case Rado's theorem \cite[Theorem 9.5]{lanrob::} tells us that if (and only if) $A$ is partition regular then there is $\emptyset \neq I \subset [k]$ such that $\sum_{i \in I}{a_i}=0$.

Schur's theorem itself gives rather good bounds on $\mathfrak{R}_A(r)$ when $A=(1,1,-1)$, and more generally \cite[Theorem 1.3]{cwasch::0} gives singly exponential bounds when $A$ is a partition regular row.  Stronger results when the equation satisfies additional properties are given in \cite[Theorems 1.4 \& 1.5]{cwasch::0} and \cite[Theorem 4.7]{gasmortum::}.

When $A=(1,\dots,1,-1)$ the numbers $\mathfrak{R}_A(r)$ are sometimes called the generalised diagonal Schur numbers (although they are just called Schur numbers in \cite{beubre::}).  These have been computed for many values of $r$, being completely known for $r=2$ \cite[Theorem 1.3]{beubre::}, and for $r \geq 3$ the reader is directed to \cite[Table 1]{ahmsch::} for recent calculations.  Note that the bounds in Theorem \ref{thm.mn} as ineffective so, for example, when $r=2$ our result says nothing more than $\mathfrak{R}_A(2)<\infty$.

When $A$ has just one row there is a large body of work computing the exact value of $\mathfrak{R}_A(2)$ using arguments which are much more combinatorial than those in the present paper.  This work has many extensions covering things such as Rado numbers for inhomogenous equations \cite[p259]{lanrob::}; off-diagonal Rado numbers \cite[p280]{lanrob::}; and Rado numbers for disjunctive equations \cite[p293]{lanrob::}.  We restrict ourselves to recording those results which ask for bounds on $\mathfrak{R}_A(2)$ under the same hypotheses as Theorem \ref{thm.mn}.

When $A=(a_1,a_2, -a_2)$ for $a_1,a_2 \in \N$ the value of $\mathfrak{R}_A(2)$ is computed in \cite[Theorem 9.17]{lanrob::}; when $A=(a_1, a_2, -(a_1+a_2))$ for $a_1,a_2 \in \N$ the value of $\mathfrak{R}_A(2)$ is computed in \cite[Theorem 1.1]{gupthutri::}; when $A=(1, 1, a_3, -a_4)$ for $a_3,a_4 \in \N$ (where partition regularity of $A$ ensures that $a_4 \in \{1,2,a_3,a_3+1,a_3+2\}$) the value of $\mathfrak{R}_A(2)$ is computed in \cite[Theorems 3, 4 \& 8]{robmye::} for $a_4=a_3$, $a_4=a_3+1$ and $a_4=2$ respectively, with the case $a_4=a_3+2$ being trivial; and when
\begin{equation}\label{eqn.study}
A=(\overbrace{1,\dots , 1}^{n \text{ times}}, -a_{n+1},\dots , -a_k)
\end{equation}
with $a_{n+1},\dots,a_k \in \N$ and $n \geq a_{n+1}+\cdots +a_k$, the value of $\mathfrak{R}_A(2)$ is computed in \cite[Theorem 3]{sar::2}. Note that the work of \cite{lanrob::}, \cite{robmye::} and \cite{sar::2} goes further and computes $\mathfrak{R}_A(2)$ for some $A$ which are not partition regular.  (This makes sense since we may have $\mathfrak{R}_A(2)<\infty$ without $\mathfrak{R}_A(r)<\infty$ for all $r \in \N$.  See \cite[Theorem 9.2]{lanrob::} for conditions on a single row $A$ such that $\mathfrak{R}_A(2)<\infty$.)

Finally, \cite[Theorem 1.8]{cwasch::0} shows that $\mathfrak{R}_A(r)=o_{r \rightarrow \infty}(r!)$ when $A$ is as in (\ref{eqn.study}) with $n=3$, $l=2$ and $a_1,a_2=1$, beating the bound following from Schur's argument.

\subsection*{Variable conventions}  There are some conflicts between standard uses for certain symbols in different areas.  $m$, $p$ and $c$ are the parameters of an $(m,p,c)$-set in Deuber's sense (as defined in \S\ref{ssec.mpc}), so that $c$ need not be an absolute constant, and $p$ need not be prime.  $\mathcal{C}$ usually denotes a colouring and $\mathfrak{C}$ the conjugation operator (see \S\ref{ssec.gn}).  $C$ then typically denotes a colour class in $\mathcal{C}$, rather than an absolute constant.

\subsection*{Big-$O$ notation}  We use big-$O$ notation in the usual way, see \emph{e.g.} \cite[p11]{taovu::}.  The constants behind the big-$O$ and $\Omega$ expressions may depend in peculiar ways on other parameters, and we shall sometimes need some control.  We capture this in the same way as \cite[p17]{gresan::1}:

Some big-$O$ expressions will be replaced by `universal functions' of the form $f:D_1 \times \cdots \times D_k \rightarrow D_0$ where each $D_i$ is one of the sets $(0,1]$, $\N_0$, or $\N$.  If $D_i=(0,1]$ then we write $x \preceq_i y$ if and only if $y \leq x$; otherwise we write $x \preceq_i y$ if and only if $x \leq y$.  We say that $f$ is \textbf{monotone} if $f(x) \preceq_0 f(y)$ whenever $x_i \preceq_i y_i$ for all $1 \leq i \leq k$.

Note that the above is the usual order on $\N$ and $\N_0$ and the opposite of the usual order on $(0,1]$.  This reflects the fact that we shall want bounds on, say, the size of an interval which do not get too much worse as, say, a the number of colours grows -- that would be a natural number parameter -- and also as the density of some related set does not get too small -- that would be a $(0,1]$ parameter.  Our notation of monotone aligns these different notions of large and small to point in the same direction.

It is useful to note that if $f(x)=O_{a}(g(x))$ where $a \in \N_0^d$ then there is a monotone function $F:\N_0^d \rightarrow \N$ such that
\begin{equation*}
|f(x)| \leq F(a)g(x) \text{ for all }x.
\end{equation*}
This can be shown by letting $F(a)$ be the max of the constants behind the $O_{a'}$ term as $a' \preceq a$ -- a finite set.

The universal functions mapping into $\N$ or $\N_0$ will usually be denoted by $F$s with various decorations \emph{e.g.} subscripts and superscripts, while those mapping into $(0,1]$ will usually be denoted by $\eta$s with various decorations.  To avoid too many different functions, we shall often use the same functions in situations where the optimal functions are almost certainly different but where there is little cost to doing so.

\section{Setup and tools}\label{sec.deuber}

In this section we record the tools we need.  First, in \S\ref{ssec.mpc}, we explain Deuber's framework \cite{deu::} for understanding colouring problems.  This will reduce the problem to proving Theorem \ref{thm.main}.  The key tools to prove this are recorded in \S\ref{ssec.gn}.  Finally we gather a few more technical facts in \S\ref{ssec.tool}.

\subsection{Deuber's Theorem}\label{ssec.mpc}  In \cite[Satz 3.1]{deu::} Deuber proved a conjecture of Rado, and we shall use Deuber's ideas here too.  We follow the exposition and definitions of \cite{gun::}: as in \cite[Definition 2.5]{gun::}\footnote{Which Gunderson notes is slightly different to Deuber's original.}, given $m,p,c \in \N$, a set $S \subset \N$ is an \textbf{$(m,p,c)$-set} if there is some $s=(s_0,\dots,s_{m+1}) \in \N^{m+1}$ such that
\begin{equation*}
S=\bigcup_{j=0}^m{\{cs_{m-j}+i_{m-j+1}s_{m-j+1}+ \cdots + i_{m}s_{m}: -p \leq i_{m-j+1},\dots,i_{m} \leq p\}}.
\end{equation*}
For example, if $m=2$ then
\begin{equation*}
S=\{cs_2\} \cup \{cs_1+i_2s_2: -p \leq i_2 \leq p\} \cup \{cs_0+i_1s_1+i_2s_2: -p \leq i_1,i_2\leq p\},
\end{equation*}
and even more concretely, the set $\{s_1\}\cup \{s_0-s_1,s_0,s_0+s_1\}$ -- which is a three-term arithmetic progression union its common difference -- is a $(1,1,1)$-set.

We shall give a little more motivation for these sets in a moment but first we state Deuber's Theorem.
\begin{theorem}[Deuber's Theorem, {\cite[Theorem 2.8]{gun::}}]
Suppose that $m,p,c,r \in \N$.  Then there are $M,P,C \in \N$ such that any $r$-colouring of an $(M,P,C)$-set contains a monochromatic $(m,p,c)$-set.
\end{theorem}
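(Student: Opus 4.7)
I would prove Deuber's Theorem by induction on $m$. The base case $m = 0$ is immediate: an $(0,p,c)$-set is the singleton $\{cs_0\}$, so any $(0,P,C)$-set with $C = c$ is itself a monochromatic $(0,p,c)$-set, regardless of $r$.

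For the inductive step I would exploit the recursive structure of $(m,p,c)$-sets: an $(m,p,c)$-set generated by $(s_0, s_1, \ldots, s_m)$ is the union of the $(m-1,p,c)$-set generated by $(s_1, \ldots, s_m)$ with the extra ``base cube'' $\{cs_0 + i_1 s_1 + \cdots + i_m s_m : -p \leq i_k \leq p\}$. So given an $r$-colouring of a suitably large $(M, P, C)$-set $T$ generated by $(t_0, \ldots, t_M)$, I would begin by observing that the ``tail'' generated by $(t_1, \ldots, t_M)$ is itself an $(M-1, P, C)$-set sitting inside $T$. Provided $(M-1, P, C)$ dominates the parameters supplied by the inductive hypothesis for $(m-1, p, c)$ with $r$ colours, this tail contains a monochromatic $(m-1, p, c)$-set $S$ of some colour $\chi_0$, generated by $(s_1, \ldots, s_m)$, where each $s_j$ is a bounded-integer combination of $t_1, \ldots, t_M$.

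The remaining task is to manufacture an extra generator $s_0$ such that the base cube $\{cs_0 + i_1 s_1 + \cdots + i_m s_m\}$ lies entirely inside colour $\chi_0$ and also sits inside $T$. To produce $s_0$ I would take $M$ far larger than what the inner induction requires and run a colour-focusing, van der Waerden-style amplification along the initial generators $t_0, t_1, \ldots$: each choice of initial segment contributes a candidate base cube whose colour pattern (a function from $\{-p, \ldots, p\}^m$ to $[r]$) must stabilise by iterated pigeonhole, and a further pigeonhole then matches the stabilised colour to $\chi_0$. Concretely one can iterate: first pigeonhole over the colour a candidate base contributes, then pigeonhole over its colour pattern as a vector, each layer of pigeonhole eating up one extra generator.

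The main obstacle is the bookkeeping needed to coordinate parameter sizes. $P$ and $C$ must dwarf $p$ and $c$ so that all the nested integer combinations demanded by the inductive hypothesis and by the colour-focusing step still fit as legitimate elements of the ambient $(M, P, C)$-set; and $M$ must be enormous in order to feed both the inner induction and the outer pigeonhole amplification simultaneously. These constraints force tower-type (Ackermann-like) bounds on $(M, P, C)$ in terms of $m, p, c, r$, but Deuber's theorem asks only for qualitative existence, which is exactly what this induction delivers.
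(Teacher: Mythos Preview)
The paper does not prove Deuber's Theorem; it is quoted from \cite[Theorem 2.8]{gun::} as background, with the paper's own contribution being the quantitative special case (Theorem~\ref{thm.main}) where the ambient set is an interval.  So there is no in-paper proof to compare against.

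That said, your sketch has a genuine gap.  You apply the inductive hypothesis \emph{first}, fixing a monochromatic $(m-1,p,c)$-set with generators $(s_1,\dots,s_m)$ and colour $\chi_0$, and only afterwards look for an $s_0$ making the base cube $\{cs_0+i_1s_1+\cdots+i_ms_m:|i_k|\le p\}$ monochromatic in that \emph{same} colour $\chi_0$.  Focusing or van der Waerden-type arguments among the candidate $s_0$'s can certainly make the base cube monochromatic in \emph{some} colour, but nothing forces that colour to equal the already-fixed $\chi_0$: pigeonhole cannot hit a pre-specified target, and an adversary can simply keep $\chi_0$ off every candidate base cube once $(s_1,\dots,s_m)$ are frozen.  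The sentence ``a further pigeonhole then matches the stabilised colour to $\chi_0$'' is precisely where the argument breaks.

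In the standard proofs the order is reversed, or the two steps are interleaved so that no colour is committed to prematurely.  One common route makes the new row monochromatic first (via Hales--Jewett or Gallai applied to that row), observes that the residual generators still span a large $(M',P',C')$-set, and iterates so that all $m+1$ rows become monochromatic, possibly in different colours; a focusing argument over $r+1$ such nested configurations then forces two of the row-colours to coincide, and those rows can be merged.  The essential difference from your outline is that the matching occurs between two \emph{discovered} colours rather than against a prescribed one.
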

The main result of this paper is the following.
\begin{theorem}\label{thm.main} Suppose that $m,p,c,r \in \N$.  Then there is $N \leq \exp(\exp(r^{O_{m,p,c}(1)}))$ such that any $r$-colouring of $[N]$ contains a monochromatic $(m,p,c)$-set.
\end{theorem}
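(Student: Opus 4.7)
My plan is to adapt the bootstrapping method of Chapman and Prendiville from Brauer configurations (a two-parameter family) to the general $(m,p,c)$-setting, which is an $(m+1)$-parameter family with a richer hierarchical structure. Fix an $r$-colouring $\mathcal{C}$ of $[N]$ and a colour class $C$ of density $\alpha \geq 1/r$. The goal is to locate parameters $s_0,\dots,s_m \in \N$ such that the entire $(m,p,c)$-set $S(s_0,\dots,s_m)$ sits inside $C$.

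The heart of the argument is a \emph{counting lemma}: the number of tuples $s$ for which $S(s) \subset C$ is at least, roughly, $\alpha^{|S|} N^{m+1}$ provided the balanced function $f_C := \mathbf{1}_C - \alpha$ is suitably controlled in a Gowers $U^d$-norm for some $d = d(m,p,c)$. The combinatorial shape of an $(m,p,c)$-set is a tower of generalised arithmetic progressions of increasing Freiman dimension: the innermost layer is the single point $cs_m$, the next is an arithmetic progression $cs_{m-1}+i_ms_m$, then a 2-dimensional GAP $cs_{m-2}+i_{m-1}s_{m-1}+i_ms_m$, and so on. Hence the natural Cauchy--Schwarz / generalised von Neumann argument should bound the deviation of the count from the main term by such a Gowers norm.

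The second step is to control this Gowers norm. By Gowers' quantitative inverse theorem for $U^d$ (the engine behind his doubly exponential bounds for van der Waerden), either $\|f_C\|_{U^d}$ is small -- in which case the counting lemma already delivers a monochromatic $(m,p,c)$-set -- or $C$ admits a density increment of some size $\delta$ on a sub-arithmetic progression of length polynomial in $\delta$. Iterating the density-increment can happen only $O(1/\alpha)=O(r)$ times before the density saturates at $1$, and each iteration shrinks the ambient window by a Gowers-sized factor. The two exponentials in the final bound then come, respectively, from the iteration count and the quantitative $U^d$-inversion performed at each stage.

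The main obstacle is the counting lemma itself: implementing a clean Cauchy--Schwarz sequence that exposes all $m$ tiers of shifts uniformly, with a single Gowers norm of some degree $d=d(m,p,c)$ controlling the error, is where the ``large notational burden'' the authors flag will bite. A natural approach is to process the count according to the tower $s_m,s_{m-1},\dots,s_0$ and carry out an inside-out generalised von Neumann argument: at each tier one performs a single Cauchy--Schwarz that converts a weight depending on the already-fixed inner tiers into a $U^{d_j}$-style Gram expression, and one then sums the tier-exponents to extract the final $d$. Once this counting lemma is in place, the remaining density-increment iteration is essentially Gowers' proof of Szemer\'edi applied to a slightly more elaborate configuration, and it should produce the required doubly exponential bound with no further essential difficulty.
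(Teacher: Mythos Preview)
Your proposal has a genuine gap: the single-colour density increment you describe cannot work, because $(m,p,c)$-sets are \emph{not} density regular when $c>1$. Concretely, take $c=2$ and let $C$ be the odd integers in $[N]$. Then $C$ has density $1/2$, yet contains no $(m,p,2)$-set at all, since every such set contains the element $2s_m$. Your increment would pass to a sub-progression on which $C$ has higher density --- here the odd residue class itself, on which $C$ has density $1$ --- but that sub-progression still contains no $(m,p,2)$-set, and the iteration is stuck. The difficulty is that $(m,p,c)$-sets are dilation-invariant but not translation-invariant, so ``density $\alpha$ on an arbitrary arithmetic progression'' is not the right invariant to increment; you need density on progressions of the special form $c\cdot P$, and you do not know in advance which colour class will be dense on such a progression.

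This is precisely the point of the Chapman--Prendiville (and Shkredov) method you cite, and it is what the paper implements. Rather than fixing a single colour class, the paper tracks the \emph{sum over all colours} of the maximal density on dilated progressions,
\[
\mu(P)=\sum_{C\in\mathcal{C}}\max_{y:\,y+P\subset\N}\ \E_{x\in c\cdot(y+P)}1_C(x),
\]
which lies in $[1,r]$ and can be shown to increase by a fixed amount whenever the relevant Gowers norm is large (the loss on the other colours being only second order). The argument is organised as an outer induction on $m$: a counting/density-increment dichotomy (Lemma~\ref{lem.count}) relates $Q_{m+1,p,c}$ to $\alpha^{|\mathcal{D}_{p,c;m+1}|}Q_{m,p,c}$, and the inner loop increments $\mu$. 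Your ``inside-out Cauchy--Schwarz'' and the paper's tier-by-tier induction on $m$ are in the same spirit for the counting side, but without the multi-colour potential $\mu$ the increment side of your argument does not terminate in the configuration you want.
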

Qualitatively this is a special case of Deuber's Theorem since $[N]$ is a $(1,N,N+1)$-set.

One of the reasons $(m,p,c)$-sets are important is their relationship with solutions of equations, which we now explain.  In \cite[Satz IV]{rad::1} Rado famously proved that partition regularity of a system is equivalent to something called the columns condition: we say that a $k \times d$ matrix $A$ satisfies the \textbf{columns condition} if there is a $d \times t$ matrix of rationals $\alpha$ and a partition $[d]=I_1 \sqcup \cdots \sqcup I_t$, such that writing $a_1,\dots,a_d$ for the columns of $A$ in their given order we have
\begin{equation*}
\sum_{i \in I_{j+1}}{a_i} =\sum_{i \in I_1\cup \cdots \cup I_j}{\alpha_{ij}a_i} \text{ for all }0 \leq j < t,
\end{equation*}
with the usual convention that the empty sum is $0$.  When we need to refer to a specific $\alpha$ we shall call it a \textbf{witness} for the columns condition.  It is natural to assume that $\alpha_{ij}=0$ for all $i \in I_{j+1}\cup \cdots \cup I_t$ and we shall always do so without remark.  In view of this we see that $t=1+\rk \alpha$.
\begin{theorem}[Rado's theorem, {\cite[Theorem 2.3]{gun::}}]\label{thm.rad} Suppose that $A$ is a $k \times d$ integer-valued matrix.  Then $A$ is partition regular if and only if $A$ satisfies the columns condition.
\end{theorem}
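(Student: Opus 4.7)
\textbf{Sufficiency} (columns condition $\Rightarrow$ partition regularity). The plan is to exhibit an explicit $x \in S^d$ with $Ax=0$ inside every $(m,p,c)$-set $S$ with suitably chosen parameters, and then invoke Deuber's theorem. Given a witness $\alpha$ and partition $[d] = I_1 \sqcup \cdots \sqcup I_t$, set $m = t-1$, let $c$ be a common denominator of the entries of $\alpha$, and choose $p \geq c\max_{i,j}|\alpha_{ij}|$. For an $(m,p,c)$-set with generators $s_0, \ldots, s_m$, define, for each $i \in I_{j+1}$ with $0 \leq j \leq t-1$,
\[
x_i := cs_j - \sum_{k=j+1}^{m} c\alpha_{ik}s_k.
\]
The coefficients $c\alpha_{ik}$ are integers in $[-p,p]$ (and vanish outside the prescribed range by the standing assumption on $\alpha$), so $x_i \in S$. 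Expanding $\sum_i a_ix_i$ and extracting the coefficient of $s_\ell$ yields $c\sum_{i \in I_{\ell+1}} a_i - c\sum_{i \in I_1\cup\cdots\cup I_\ell} \alpha_{i\ell}a_i$, which vanishes by the columns condition; hence $Ax=0$. Since $[N]$ is a $(1,N,N+1)$-set, Deuber's theorem produces a monochromatic $(m,p,c)$-set inside any $r$-colouring of $[N]$ for $N$ sufficiently large, and the formula above then yields a monochromatic solution to $Ax=0$.

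\textbf{Necessity} (partition regularity $\Rightarrow$ columns condition). The plan is to use a finite colouring chosen so that any monochromatic solution forces the columns condition. Fix a prime $P > d\max_{i,j}|A_{ij}|$ and colour $n \in \N$ by the leading nonzero digit of its base-$P$ expansion; this is a $(P-1)$-colouring. By partition regularity there is a monochromatic $x \in \N^d$ with $Ax=0$, sharing some common leading digit $\delta \in \{1,\ldots,P-1\}$. Let $v^{(1)}<\cdots<v^{(t)}$ be the distinct $P$-adic valuations among the $x_i$ and set $I_k := \{i : v(x_i) = v^{(k)}\}$. The contribution to $Ax=0$ at order $P^{v^{(1)}}$ is $\delta\sum_{i \in I_1}a_i$, which must vanish modulo $P$, so $\sum_{i \in I_1}a_i = 0$ by the choice of $P$. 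This is the $j=0$ relation of the columns condition. Iterating the argument --- subtracting off $\delta P^{v^{(1)}}$ from each $x_i$ with $i \in I_1$ and examining the next-leading order --- expresses $\sum_{i \in I_{j+1}} a_i$ as a rational combination of $\{a_i : i \in I_1\cup\cdots\cup I_j\}$ whose coefficients arise from the higher base-$P$ digits of the $x_i$ with $i$ in earlier blocks; these provide the remaining entries of a witness $\alpha$.

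The main obstacle is the bookkeeping in the necessity direction: the gaps $v^{(k+1)}-v^{(k)}$ are arbitrary, so the decomposition of $Ax=0$ order by order in $P$ requires care to line up the contributions with the successive columns condition relations, and one must verify that the coefficients produced are genuinely rational (which follows since $\delta$ is a fixed unit mod $P$). The sufficiency direction, by contrast, is essentially the one-line calculation above once the formula for $x_i$ has been written down, and the reduction to Deuber's theorem is immediate.
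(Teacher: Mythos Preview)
The paper does not prove Rado's theorem; it is quoted as \cite[Theorem 2.3]{gun::} and used as a black box.  So there is no ``paper's proof'' to compare against, and the question is simply whether your sketch stands on its own.

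Your sufficiency direction is fine: the formula for $x_i$ is exactly the construction behind Theorem~\ref{thm.gun}, and combining that with Deuber's theorem gives partition regularity.  (Strictly, Deuber's theorem produces $M,P,C$ depending on $m,p,c,r$, and one should say that $[N]$ \emph{contains} an $(M,P,C)$-set once $N$ is large, rather than that $[N]$ \emph{is} one; but this is cosmetic.)

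The necessity direction has a genuine gap in the iteration.  The $j=0$ step is correct: reducing $\sum_i a_i x_i=0$ modulo $P^{v^{(1)}+1}$ gives $\delta\sum_{i\in I_1}a_i\equiv 0\pmod P$, and your bound $P>d\max|A_{ij}|$ forces $\sum_{i\in I_1}a_i=0$.  But ``subtracting off $\delta P^{v^{(1)}}$ and examining the next order'' does not deliver the relations for $j\geq 1$.  After the subtraction the modified vector is still a solution, but it is no longer monochromatic (its least significant nonzero digits have changed), so you cannot re-run the colouring argument.  If instead you try to read off the $j\geq 1$ relations algebraically from the single solution $x$, what you actually get is that the image of $\sum_{i\in I_{j+1}}a_i$ in the quotient lattice $\Z^k/(V_j\cap\Z^k)$ is divisible by $P$, where $V_j=\operatorname{span}_\Q\{a_i:i\in I_1\cup\cdots\cup I_j\}$.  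To conclude that this image vanishes you need $P$ larger than its coordinates in that quotient, and those coordinates are not controlled by $d\max|A_{ij}|$ --- they depend on the partition through the change-of-basis needed to split off $V_j\cap\Z^k$.  The standard repairs are either (i) take $P$ larger than a bound computed over all finitely many ordered set partitions of $[d]$ and all $j$, or (ii) run the argument for infinitely many primes and use pigeonhole to find a single partition that arises for infinitely many of them, whereupon divisibility by arbitrarily large $P$ forces the images to vanish.  Your final paragraph flags the difficulty as ``bookkeeping'', but it is more than that: the iteration mechanism you describe does not work, and the size of $P$ is not the one you chose.
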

Deuber connected the columns condition to $(m,p,c)$-sets through the following.
\begin{theorem}[{\cite[Theorem 2.6(i)]{gun::}}]\label{thm.gun} Suppose that $A$ is a $k \times d$ integer-valued matrix satisfying the columns condition as witnessed by $\alpha$.  Then, writing $c$ for the least common multiple of the denominators of the rationals in $\alpha$, every $(1+\rk \alpha, \max_{i,j}{|c\alpha_{ij}|},c)$-set $S$ has some $x \in S^d$ such that $Ax=0$.
\end{theorem}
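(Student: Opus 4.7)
The plan is to write down an explicit $x \in S^d$ read off directly from the witness $\alpha$, and then verify $Ax = 0$ by a telescoping cancellation driven by the columns condition. Write $t = 1 + \rk \alpha$, let $[d] = I_1 \sqcup \cdots \sqcup I_t$ be the partition from the columns condition, let $p = \max_{i,j} |c\alpha_{ij}|$, and let $s = (s_0, \ldots, s_t) \in \N^{t+1}$ be the generator of our $(t,p,c)$-set $S$. The key design choice is to assign each coordinate $i \in I_l$ to a level-$(t-l)$ element of $S$, so that its leading term rests on $s_l$ while its tail uses the later generators $s_{l+1}, \ldots, s_t$ with correction coefficients read off from the $i$th row of $\alpha$. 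This pairing is essentially forced: level-$j$ elements of a $(t,p,c)$-set have tails supported on strictly higher-indexed generators, so later parts of the partition must sit at lower levels in order for their tails to absorb the excess that the columns condition extracts from still-later parts.

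Concretely, for each $i \in I_l$ I would set
\[
x_i \;=\; c s_l \;-\; c \sum_{j=l}^{t-1} \alpha_{ij}\, s_{j+1}.
\]
Membership in $S$ requires two small checks: each $c\alpha_{ij}$ is an integer because $c$ is the least common multiple of the denominators of $\alpha$, and $|c\alpha_{ij}| \leq p$ by the definition of $p$. Matching formulae, $x_i$ is then exactly the level-$(t-l)$ element of $S$ with leading coefficient $c s_l$ and tail coefficient $-c\alpha_{ij}$ in front of $s_{j+1}$ for each $j = l, \ldots, t-1$, so $x_i \in S$.

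To verify $Ax = 0$, I would expand and separate leading from tail contributions,
\[
\sum_{i=1}^d a_i x_i \;=\; c \sum_{l=1}^t s_l \sum_{i \in I_l} a_i \;-\; c \sum_{l=1}^t \sum_{j=l}^{t-1} s_{j+1} \sum_{i \in I_l} \alpha_{ij}\, a_i,
\]
and apply the columns condition to the first double sum: the $l=1$ term vanishes because $\sum_{i \in I_1} a_i = 0$, and for each $l \geq 2$ the identity $\sum_{i \in I_l} a_i = \sum_{l' < l} \sum_{i \in I_{l'}} \alpha_{i, l-1}\, a_i$ rewrites the leading contribution into the same shape as the tail contribution. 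Reindexing the tail sum with $j' = j + 1$, the two double sums acquire identical index sets and coincide term-by-term, so the whole expression cancels. The main obstacle is purely notational bookkeeping: getting the level assignment right so that the columns condition and the layered structure of $S$ line up. Once that is fixed the arithmetic is forced, and the parameters $m = 1 + \rk \alpha$ and $p = \max|c\alpha_{ij}|$ turn out to be exactly the minima the construction demands.
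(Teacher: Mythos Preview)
Your argument is correct and is essentially the standard construction: for $i\in I_l$ you take the level-$(t-l)$ element $x_i=cs_l-\sum_{j=l}^{t-1}(c\alpha_{ij})s_{j+1}$, check it lies in $S$ because $c\alpha_{ij}\in\Z$ with $|c\alpha_{ij}|\le p$, and then the columns condition makes $\sum_i a_ix_i$ telescope to $0$. I verified the reindexing step and it goes through cleanly; the convention $\alpha_{ij}=0$ for $i\in I_{j+1}\cup\cdots\cup I_t$ (which the paper adopts) is exactly what ensures your sum $\sum_{j=l}^{t-1}$ captures every nonzero entry in row $i$.

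As for comparison: the paper does not actually prove this theorem. It is stated with a citation to \cite[Theorem 2.6(i)]{gun::}, together with the remark that although the formulation differs slightly from Gunderson's, ``a quick look at the proof shows that this is what is proved.'' Your write-up is precisely that proof---the explicit solution read off from the witness $\alpha$---so you have supplied what the paper chose to outsource. One minor cosmetic point: in your construction the generator $s_0$ is never used, which just reflects that the parameter $m=1+\rk\alpha=t$ is a convenient upper bound rather than the exact number of layers you touch; this is harmless and matches the statement.
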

This is not \cite[Theorem 2.6(i)]{gun::} as stated, but a quick look at the proof shows that this is what is proved.

\begin{proof}[Proof of Theorem \ref{thm.mn} given Theorem \ref{thm.main}]
By Theorem \ref{thm.rad} and Theorem \ref{thm.gun}, we see that if $A$ is partition regular then there are naturals $m,p,c=O_A(1)$ such that any $(m,p,c)$-set $S$ contains some $x \in S^d$ with $Ax=0$.  By Theorem \ref{thm.main} we see that for $N \leq \exp(\exp(r^{O_{m,p,c}(1)}))=\exp(\exp(r^{O_A(1)}))$ any $r$-colouring of $[N]$ has a colour class $C$ containing a set $S$ that is an $(m,p,c)$-set, and hence there is some $x \in S^d \subset C^d$ with $Ax=0$ as required.
\end{proof}

\subsection{Gowers norms}\label{ssec.gn}  The Gowers norms are defined in \cite[Lemma 3.9]{gow::0}, though they are not given that name, and while they can be defined more generally for finite Abelian groups we shall restrict attention to cyclic groups of prime order (in line with \cite{gow::0}).  We use \cite{tao::10} as our basic reference though admittedly many of the result there are left as exercises.  The material is developed in considerable generality in \cite{gretao::7}; the generality we need is closer to that discussed in \cite[\S2]{gowwol::0}.  (Other introductions may be found in many places including \cite[\S4]{gretao:::}, \cite[\S\S2\&3]{hatlov::}, \cite[Appendix A]{wal::2}, and \cite[\S1]{man::3}.  All these, including \cite{gowwol::0}, ultimately refer to \cite{gretao::7} for details, though the paper \cite{wal::2} does expand on the details somewhat in \S4.)

For $N \in \N$ (which will be prime though need not be right now), $k \in \N$ and $f:\Z/N\Z \rightarrow \C$ we put
\begin{equation*}
\|f\|_{U^k(\Z/N\Z)}:=\left(\E_{x,h_1,\dots,h_k \in \Z/N\Z}{\prod_{\omega \in \{0,1\}^k}{\mathfrak{C}^{|\omega|}f(x+\omega\cdot h)}}\right)^{2^{-k}},
\end{equation*}
where $\mathfrak{C}$ denotes the operation of complex conjugation.

The map $\|\cdot \|_{U^k(\Z/N\Z)}$ defines a norm \cite[Exercise 1.3.19]{tao::10} for $k \geq 2$, and enjoys the nesting property $\|\cdot \|_{U^k(\Z/N\Z)} \leq \|\cdot \|_{U^{k+1}(\Z/N\Z)}$ for $k \in \N$ \cite[Exercise 1.3.19]{tao::10}.  (Proofs of these two facts are given explicitly on \cite[p466]{taovu::} and in \cite[(11.7)]{taovu::}.)

One of the reasons these norms are important is that they control counts of various linear configurations.  Specifically, suppose that $\Psi:\Z^d \rightarrow \Z^l$ is a homomorphism and $f$ is a vector of $l$ functions $\Z/N\Z \rightarrow \C$.  We define
\begin{equation}\label{eqn.dlp}
\Lambda_\Psi(f):=\E_{x \in [N]^d}{\prod_{i=1}^l{f_i(\Psi_i(x)+N\Z)}}.
\end{equation}
The following is the `generalised von Neumann Theorem' we need.  It is a special case of \cite[Theorem 4.1]{gretao:::} once the notation has been unpacked, and also of \cite[Exercise 1.3.23]{tao::10} combined with \cite[Exercise 1.3.14]{tao::10}.
\begin{theorem}\label{thm.gvn}
Suppose that $\Psi:\Z^d \rightarrow \Z^l$ is a homomorphism and for every $i \neq j$,  $(\Psi_i,\Psi_j)$ is a pair of independent vectors (\emph{i.e.} if $z\Psi_i + w\Psi_j\equiv 0$ for some $z,w \in \Z$ then $z=w=0$).  Then there are naturals $N_0(\Psi)$ and $k(\Psi)$ such that if $N \geq N_0(\Psi)$ is a prime and $f$ is a vector of $l$ functions $\Z/N\Z \rightarrow \C$ bounded by $1$ we have
\begin{equation*}
|\Lambda_\Psi(f)| \leq \inf_{1 \leq i \leq l}{\|f_i\|_{U^k(\Z/N\Z)}}.
\end{equation*}
\end{theorem}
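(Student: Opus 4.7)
The plan is to prove this by iterated Cauchy-Schwarz, following the standard generalised von Neumann template (as in \cite[Theorem 4.1]{gretao:::}). By symmetry it suffices to fix $i=1$ and show $|\Lambda_\Psi(f)| \leq \|f_1\|_{U^{l-1}(\Z/N\Z)}$, so one may take $k(\Psi):=l-1$.

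At each Cauchy-Schwarz step I eliminate one of the functions $f_2, \ldots, f_l$ while doubling the cube structure around $f_1$. To eliminate $f_j$, the pairwise independence of $(\psi_1, \psi_j)$ -- the fact that they are not parallel over $\Q$ -- lets me pick a vector $e_j \in \Z^d$ with $\psi_j \cdot e_j = 0$ but $\psi_1 \cdot e_j =: a_j \neq 0$ (such $e_j$ exists because the hyperplane $\{e:\psi_j \cdot e =0\}$ is not contained in $\{e:\psi_1 \cdot e =0\}$). Splitting the averaging variable $x$ into a component $y$ along $e_j$ and a complement $x'$, and applying Cauchy-Schwarz in $x'$, the factor $f_j(\psi_j(x))$ is $y$-independent and can be bounded pointwise by $1$ and pulled out; meanwhile $f_1(\psi_1(x))$ acquires a shift $h a_j$ in its argument; and each surviving $f_k$ (for $k \neq 1, j$) becomes a multiplicative derivative $\Delta_{h \psi_k \cdot e_j} f_k$, which is still bounded by $1$.

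Iterating this procedure for $j = 2, \ldots, l$, after $l-1$ Cauchy-Schwarz applications and appropriate changes of variable I obtain a bound of the form
\begin{equation*}
|\Lambda_\Psi(f)|^{2^{l-1}} \leq \E_{y, h_2, \ldots, h_l \in \Z/N\Z} \prod_{\omega \in \{0,1\}^{l-1}} \mathfrak{C}^{|\omega|} f_1\bigl(y + \omega \cdot (h_2 a_2, \ldots, h_l a_l)\bigr),
\end{equation*}
the iterated derivatives accumulated on the other $f_k$'s having been dominated by $1$ at each stage. Provided $N$ is prime and larger than $\max_j|a_j|$ -- which defines $N_0(\Psi)$ -- each $a_j$ is invertible in $\Z/N\Z$, and the change of variables $h_j \mapsto a_j^{-1} h_j$ turns the right-hand side into exactly $\|f_1\|_{U^{l-1}(\Z/N\Z)}^{2^{l-1}}$, completing the proof.

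The main obstacle is the bookkeeping: one must check that after each Cauchy-Schwarz step the functions derived from $f_k$ (for $k$ not yet eliminated) can still be harmlessly dominated, and that the cube structure around $f_1$ builds up correctly. The cleanest route is to process the $f_j$'s in the fixed order $j = 2, 3, \ldots, l$, choosing $e_j$ depending only on the pair $(\psi_1, \psi_j)$; at each step the chosen direction may incidentally introduce further multiplicative derivatives on the $f_k$ with $k > j$, but since derivatives of bounded functions remain bounded, these cause no trouble and are simply absorbed when $f_k$ is itself eliminated at step $k$. The pairwise independence hypothesis is precisely what is needed to run each step; no further ``general position'' condition on the system is required.
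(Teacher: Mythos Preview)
The paper does not supply its own proof of this theorem: it is quoted as a special case of \cite[Theorem 4.1]{gretao:::} and of \cite[Exercise 1.3.23]{tao::10} combined with \cite[Exercise 1.3.14]{tao::10}.  Your sketch is exactly the standard iterated Cauchy--Schwarz argument underlying those references, and it is correct.  A few small points worth tightening:

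\begin{itemize}
\item Rather than ``splitting $x$ into a component along $e_j$ and a complement'', it is cleaner (and avoids having to exhibit a direct-sum decomposition of $(\Z/N\Z)^d$) to introduce an auxiliary variable $y\in\Z/N\Z$ via the shift $x\mapsto x+ye_j$, which leaves $\Lambda_\Psi$ unchanged by translation invariance; then Cauchy--Schwarz is applied in $x$ (and in the previously generated $h$-variables), with the inner average over $y$.
\item At each step one should also note that the expression obtained after step $j$ is a genuine $\Lambda$-type average in $x$ (with one fewer form), so the iteration is legitimate; the passage from $|\Lambda|^{2^{s}}\leq \E_{h_2,\dots,h_{s+1}}B$ to $|\Lambda|^{2^{s+1}}\leq \E_{h_2,\dots,h_{s+2}}C$ uses one further application of Cauchy--Schwarz in $(h_2,\dots,h_{s+1})$ together with $|B|^2\leq \E_{h_{s+2}}C$.
\item The threshold $N_0(\Psi)$ must also be large enough that $\psi_1$ has some coefficient not divisible by $N$, so that $\psi_1(x)$ is equidistributed on $\Z/N\Z$ when $x$ ranges over $(\Z/N\Z)^d$; this lets you replace $\psi_1(x)$ by the free variable $y$ at the end.
\end{itemize}

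With these clarifications your argument goes through and yields $k(\Psi)=l-1$, which is the Cauchy--Schwarz complexity bound one expects from pairwise independence alone.
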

We shall use the above to count $(m,p,c)$-sets.  This was already done by L{\^e} in \cite{Le::} for the purpose of transferring the partition regularity of Brauer configurations to the sets $\{p-1:p \text{ is prime}\}$ and $\{p+1:p \text{ is prime}\}$, itself answering a question of Li and Pan \cite{lipan::}.

We also need Gowers' inverse theorem.  The following result is what is proved in \cite[Theorem 18.1]{gow::0}, though it is not stated in precisely this way.
\begin{theorem}\label{thm.gi} There is a monotone function $F_1:\N \rightarrow \N$ such that the following holds.  Suppose that $N$ is prime, $\epsilon \leq \frac{1}{2}$ and $f:\Z/N\Z\rightarrow \C$ is bounded in magnitude by $1$ with $\|f\|_{U^k(\Z/N\Z)} \geq \epsilon$.  Then there is a partition of $[N]$ into arithmetic progressions $P_1,\dots,P_M$ of average size at least $N^{\epsilon^{F_1(k)}}$ such that
\begin{equation*}
\sum_{j=1}^M{\left|\sum_{s \in P_j}{f(s+N\Z)}\right|} \geq \epsilon^{F_1(k)}N.
\end{equation*}
\end{theorem}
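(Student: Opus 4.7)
The plan is to derive Theorem~\ref{thm.gi} directly from Gowers' inverse theorem for $U^k$ \cite[Theorem 18.1]{gow::0} by repackaging his conclusion into the partition-and-$L^1$-mass language used here. The two main ingredients are Gowers' local structural statement and a Weyl-type cell decomposition that flattens a polynomial phase of degree at most $k-1$ into nearly constant pieces.

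First I would invoke \cite[Theorem 18.1]{gow::0} in its local form to obtain an AP $Q \subseteq [N]$ of length $|Q| \geq N^{\epsilon^{F_a(k)}}$ and a polynomial $q:\Z \to \Z/N\Z$ of degree at most $k-1$ satisfying the correlation estimate $\bigl|\sum_{s \in Q} f(s+N\Z)\,e_N(-q(s))\bigr| \geq \epsilon^{F_a(k)}|Q|$, where $e_N(x):=\exp(2\pi i x/N)$. A standard Weyl-type decomposition (applied iteratively on the derivatives of $q$) then partitions $Q$ into sub-APs $Q_1,\dots,Q_L$ of average length at least $|Q|^{\epsilon^{F_b(k)}}$ on each of which $e_N(-q(\cdot))$ oscillates by at most $\tfrac{1}{2}\epsilon^{F_a(k)}$. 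On each $Q_i$ the triangle inequality then yields $|\sum_{s\in Q_i} f(s)| \geq |\sum_{s\in Q_i} f(s)e_N(-q(s))| - \tfrac{1}{2}\epsilon^{F_a(k)}|Q_i|$, and summing over $i$ produces an $L^1$ mass of at least $\tfrac{1}{2}\epsilon^{F_a(k)}|Q|$ across the $Q_i$. The remaining points $[N] \setminus Q$ are then packaged into further long APs by greedy subdivision, contributing non-negatively to the total sum.

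The principal obstacle is upgrading the single-AP bound $\epsilon^{F_a(k)}|Q|$ into a global mass of order $\epsilon^{F_1(k)}N$. The standard approach is to iterate: after extracting mass from $Q$, subtract off the correlated part and observe a strict decrease in the $U^k$ norm, then reapply the theorem to the residue. Gowers' proof in \cite[\S 18]{gow::0} in fact already runs this iteration internally as part of its density-increment loop, so the global partition can be read off directly from a careful pass through that argument. All that is left is bookkeeping of the exponent $F_1(k)$ as the polynomial-in-$\epsilon$ losses from the Balog--Szemer\'edi--Gowers step, the Freiman-type step, and the cell decomposition compose; monotonicity of $F_1$ is automatic by taking pointwise maxima over smaller values of $k$ (as discussed in the paragraph preceding this theorem in \S1).
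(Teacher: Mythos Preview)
The paper gives no proof of this theorem: it simply records the statement and asserts that ``the following result is what is proved in \cite[Theorem 18.1]{gow::0}, though it is not stated in precisely this way.'' In other words, Theorem~\ref{thm.gi} is quoted from the literature, not proved here.

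Your proposal therefore goes well beyond what the paper does. More to the point, you are working harder than necessary: Gowers' Theorem~18.1 in \cite{gow::0} is already stated in precisely this global partition-and-$L^1$-mass form (a partition of $[N]$ into arithmetic progressions of large average length on which the balanced function has large total $L^1$ deviation). It is not a ``local'' correlation-with-polynomial-phase statement that then needs to be globalised by Weyl decomposition and an outer iteration; that work is already internal to Gowers' proof and is packaged into the final statement. So the derivation you sketch is essentially a recapitulation of Gowers' own argument rather than a reduction from one form of his theorem to another. The only genuine content in passing from \cite[Theorem 18.1]{gow::0} to Theorem~\ref{thm.gi} is the cosmetic step of replacing Gowers' explicit doubly-exponential bound by a monotone function $F_1(k)$, which is immediate from the remark on monotone majorants in the big-$O$ subsection of~\S1.
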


\subsection{Convolution, dilation and progressions}\label{ssec.tool}

First we record notation for dilation and translation: given $x,y \in \Z$ we write $\lambda_x(y):=xy$; further, given $f:\Z \rightarrow \C$ we write $\tau_x(f)(y):=f(y+x)$. 

Suppose that $P\subset \Z$ is an arithmetic progression of odd length.  Then
\begin{equation*}
P=x_P+d_P \cdot \{-N_P,\dots,N_P\}
\end{equation*}
for some $x_P \in \Z$ called the \textbf{centre}; $d_P \in \N_0$ called the \textbf{common difference}; and $N_P \in \N_0$ called the \textbf{radius}.  Technically the common difference and radius need not be uniquely defined but this only becomes a problem for arithmetic progressions of size $1$ where the necessary adaptations of any argument are trivial and omitted for clarity.

We work with arithmetic progressions of odd length for convenience not because of any important difference.  We say that $P$ is a \textbf{centred arithmetic progression} if $x_P=0$, so in particular a centred arithmetic progression is of odd length.

For $\delta \geq 0$ we shall define
\begin{equation*}
I_\delta(P):=d_P \cdot \{-\lfloor \delta N_P\rfloor,\dots,\lfloor \delta N_P\rfloor\},
\end{equation*}
and below record some basic properties of these `fractional dilates' of progressions.  In many cases these properties are special cases of properties of Bohr sets (see \cite[\S4.4]{taovu::}).  We do not require the generality of Bohr sets here because the result of Gowers' inverse theorem (Theorem \ref{thm.gi}) is a decomposition in terms of progressions.  This has the additional benefit of meaning we do not need to deal with the problem of finding regular Bohr sets (see \cite[Lemma 4.24]{taovu::}), since all progressions are regular in a suitable sense.  This is captured in the last three properties below.
\begin{lemma}[Basic properties]\label{lem.triv}
Suppose that $P$ and $P'$ are arithmetic progressions of odd length; $c \in \N$; $x \in \Z$; and $\delta, \delta' \in (0,1]$.
\begin{enumerate}
\item \label{pt.trivsym} \emph{(Symmetry)} $I_\delta(P)$ is a centred progression of size at least $\frac{1}{3}\delta |P|$;
\item \label{pt.monpar} \emph{(Monotonicity in radius)} $I_{\delta'}(P) \subset I_{\delta}(P)$ whenever $\delta' \leq \delta$;
\item \label{pt.monprog} \emph{(Monotonicity in progression)} $I_{\delta}(P') \subset I_{\delta}(P)$ whenever $P' \subset P$;
\item \label{pt.cen} \emph{(Translation)} $x+P$ is a progression of odd length and $I_\delta(P)=I_\delta(x+P)$;
\item \label{pt.trivdil} \emph{(Dilations)} $c\cdot P$ is an arithmetic progression of odd length and $I_{\delta}(c\cdot P) = c\cdot I_\delta(P)$;
\item \label{pt.trivsa} \emph{(Sub-additivity)} $I_{\delta}(P)+I_{\delta'}(P) \subset I_{\delta+\delta'}(P)$;
\item \label{pt.comp} \emph{(Composition)} $I_{\delta}(I_{\delta'}(P))\subset I_{\delta\delta'}(P)$;
\item \label{pt.trivint}\emph{(Interiors)} there is an arithmetic progression of odd length, $\Int_\delta(P)$, such that
\begin{equation*}
\Int_\delta(P)+I_\delta(P) \subset P \text{ and } |\Int_\delta(P)| \geq (1-\delta)|P|;
\end{equation*}
\item \label{pt.trivclos}\emph{(Closures)} $P+I_\delta(P)$ is an arithmetic progression of odd length and
\begin{equation*}
|P + I_\delta(P)|\leq (1+\delta)|P|;
\end{equation*}
\item \label{pt.trivinv} \emph{(Invariance)} for all $f:\Z \rightarrow \C$ and $y \in I_\delta(P)$ we have
\begin{equation*}
|\E_{x \in P}{\tau_y(f)(x)} - \E_{x \in P}{f(x)}| \leq 2\delta \|f\|_{L_\infty}.
\end{equation*}
\end{enumerate}
\end{lemma}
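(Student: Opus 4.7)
The plan is to verify each of the ten parts by direct computation from the defining formulae $P = x_P + d_P \cdot \{-N_P,\dots,N_P\}$ and $I_\delta(P)= d_P\cdot\{-\lfloor \delta N_P\rfloor,\dots,\lfloor \delta N_P\rfloor\}$. The first structural observation is that $I_\delta(P)$ depends only on $d_P$ and $N_P$, not on $x_P$, which yields Translation (\ref{pt.cen}) at once; similarly, since $c \cdot P$ has centre $cx_P$, common difference $cd_P$ and the same radius $N_P$, Dilations (\ref{pt.trivdil}) is immediate. Symmetry (\ref{pt.trivsym}) is also direct apart from the size bound: when $\delta N_P \geq 1$ one has $\lfloor \delta N_P\rfloor \geq \delta N_P/2$, so $|I_\delta(P)|\geq \delta N_P + 1 \geq \delta(2N_P+1)/3$ using $\delta\leq 1$; when $\delta N_P<1$ the set reduces to $\{0\}$ and the bound $1\geq \delta(2N_P+1)/3$ follows since $\delta(2N_P+1)<2+\delta\leq 3$.

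Parts (\ref{pt.monpar}), (\ref{pt.trivsa}) and (\ref{pt.comp}) reduce to elementary floor inequalities: $\delta'\leq \delta$ implies $\lfloor \delta' N_P\rfloor \leq \lfloor \delta N_P\rfloor$; the identity $\lfloor a\rfloor + \lfloor b\rfloor\leq \lfloor a+b\rfloor$ gives Sub-additivity; and $\lfloor \delta \lfloor \delta' N_P\rfloor\rfloor \leq \lfloor \delta\delta' N_P\rfloor$ gives Composition. For Monotonicity in progression (\ref{pt.monprog}), one first notes that $P'\subset P$ forces $d_{P'}=k d_P$ for some $k\in\N$ (otherwise $P'$ would contain an element not in the lattice $x_P+d_P\Z$), and comparing spans gives $k N_{P'}\leq N_P$; then each element of $I_\delta(P')$ has the form $k d_P j$ with $|j|\leq \lfloor \delta N_{P'}\rfloor$, so $|kj|\leq \delta k N_{P'}\leq \delta N_P$ and hence $kj \in \{-\lfloor \delta N_P\rfloor,\dots,\lfloor \delta N_P\rfloor\}$.

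For Interiors (\ref{pt.trivint}) and Closures (\ref{pt.trivclos}) I would set $\Int_\delta(P):=x_P + d_P\cdot \{-(N_P-\lfloor \delta N_P\rfloor),\dots,N_P-\lfloor \delta N_P\rfloor\}$; the inclusion $\Int_\delta(P)+I_\delta(P)\subset P$ is the triangle inequality on integer indices, and the lower bound on $|\Int_\delta(P)|$ uses $\lfloor \delta N_P\rfloor \leq \delta N_P$. Similarly $P+I_\delta(P) = x_P + d_P\cdot\{-(N_P+\lfloor \delta N_P\rfloor),\dots,N_P+\lfloor \delta N_P\rfloor\}$, and the bound $2(N_P+\lfloor \delta N_P\rfloor)+1\leq (1+\delta)(2N_P+1)$ is an arithmetic check. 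Finally, for Invariance (\ref{pt.trivinv}), write $y=m d_P$ with $|m|\leq \lfloor \delta N_P\rfloor$; the symmetric difference $(P+y)\triangle P$ has exactly $2|m|$ elements, so
\begin{equation*}
\Bigl|\E_{x\in P} f(x+y)-\E_{x\in P} f(x)\Bigr| \leq \frac{2|m|}{|P|}\|f\|_{L_\infty} \leq \frac{2\delta N_P}{2N_P+1}\|f\|_{L_\infty}\leq 2\delta \|f\|_{L_\infty}.
\end{equation*}
There is no genuine obstacle here; the only items warranting a moment's care are the small case split in Symmetry and the divisibility argument underlying Monotonicity in progression.
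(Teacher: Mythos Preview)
Your proposal is correct and follows essentially the same approach as the paper: direct computation from the defining formulae, the same explicit construction of $\Int_\delta(P)$, and the same floor inequalities for (\ref{pt.monpar}), (\ref{pt.trivsa}), (\ref{pt.comp}). Two minor differences worth noting: for (\ref{pt.monprog}) the paper simply calls it ``immediate'' whereas you supply the divisibility argument (which is indeed needed, modulo the size-$1$ case the paper explicitly sets aside); and for (\ref{pt.trivinv}) the paper routes through $\Int_\delta(P)$ while your symmetric-difference count is more direct and in fact yields the slightly sharper bound $\delta\|f\|_{L_\infty}$.
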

\begin{proof}
(\ref{pt.trivsym}) is trivial on noting that $|I_\delta(P)| = 2\lfloor \delta N_P\rfloor +1 \geq \frac{1}{3}\delta (2N_P+1)$.  (\ref{pt.monpar}), (\ref{pt.monprog}), (\ref{pt.cen}), and (\ref{pt.trivdil}) are immediate.  (\ref{pt.trivsa}) follows since $\lfloor \delta N_P\rfloor + \lfloor \delta' N_P\rfloor \leq \lfloor (\delta+\delta') N_P\rfloor$; and (\ref{pt.comp}) since $\lfloor \delta \lfloor \delta' N\rfloor \rfloor \leq \lfloor \delta\delta'N\rfloor$. For (\ref{pt.trivint}) set
\begin{equation*}
\Int_\delta(P):=x_P+d_P\cdot \{-(N_P-\lfloor \delta N_P\rfloor),\dots,N_P-\lfloor \delta N_P\rfloor\},
\end{equation*}
so that $\Int_\delta(P)$ is an arithmetic progression of odd length, $\Int_\delta(P) + I_\delta(P) \subset P$, and
\begin{equation*}
|\Int_\delta(P)| \geq 2(N_P-\lfloor \delta N_P\rfloor) +1 = 2N_P +1 - 2\lfloor \delta N_P\rfloor \geq (1-\delta)|P|.
\end{equation*}
For (\ref{pt.trivclos}) we note that
\begin{equation*}
P+I_\delta(P) = x_P+d_P\cdot \{-N_P,\dots,N_P\} + d_P\cdot \{ -\lfloor \delta N_P\rfloor,\dots,\lfloor \delta N_P\rfloor\},
\end{equation*}
and so
\begin{equation*}
|P+I_\delta(P)| = 2(N_P+\lfloor \delta N_P\rfloor) +1 \leq (1+\delta)(2N_P+1).
\end{equation*}
For (\ref{pt.trivinv}) note if $s' \in I_\delta(P)$ then we have
\begin{align*}
& \left|\E_{s \in P}{\tau_{s'}(f)(s)} - \E_{s \in P}{f(s)}\right|\\
& \qquad = \left|\E_{s \in P}{(1-1_{\Int_\delta(P)})(s+s') f(s+s')} + \E_{s \in P}{1_{\Int_\delta(P)}(s+s')f(s+s')}\right.\\
&\qquad \qquad \left.- \E_{s \in P}{1_{\Int_\delta(P)}(s)f(s)} - \E_{s \in P}{(1-1_{\Int_\delta(P)})(s)f(s)}\right|\\
&\qquad \leq \left|\E_{s \in P}{(1-1_{\Int_\delta(P)})(s+s') f(s+s')}\right|+\left|\E_{s \in P}{(1-1_{\Int_\delta(P)})(s)f(s)}\right| \leq 2\delta \|f\|_{L_\infty}.
\end{align*}
The result is proved.
\end{proof}

\section{An example}

The notation in the final proof is quite heavy, so before turning to this we present an example case kindly suggested by one of the referees.

The arguments of \cite{chapre::} work to deal with $(1,p,c)$-sets (see \cite[Theorem 5.1]{chapre::}) and are similar to ours in terms of how these sorts of sets are dealt with.  The additional complexity we encounter is in dealing with $(m,p,c)$-sets for $m>1$.  Our approach is inductive on $m$, and our intention is that that by motivating the $m=2$ case the general argument will become clear.

We consider the problem of finding monochromatic septuples
\begin{equation}\label{eqn.config}
(x;y,x+y; z,z+x,z+y,z+x+y)
\end{equation}
in $r$-colourings of $\{1,\dots,N\}$.  Such septuples do not correspond to an $(m,p,c)$-set, but for our purposes it behaves rather like a $(2,1,1)$-set.  In fact looking for configurations of this type is a special case of Folkman's theorem \cite[Theorem 11, \S3.4]{grarotspe::0} (an explanation of the name may also be found in that reference), which was discovered independently by Folkman\footnote{Folkman's proof was unpublished, but a record of the fact he proved is found in \cite[Corollary 4]{grarot::}.}, Rado \cite{rad::4}, and Sanders \cite[Theorem 2]{san::31}.

We treat the three sets of terms in (\ref{eqn.config}) separated by semi-colons at three different scales.  In particular, we shall find arithmetic progressions $P_1$, $P_2$, and $P_3$ iteratively by using the Gowers inverse theorem (Theorem \ref{thm.gi}) to give a density increment for a colour class on a certain progression.  This increment translates to an increment to the sum over all colour classes of their maximum density on the translate of a progression.  Importantly the translate may be different for different colour classes; and the process terminates since the sum of the maximal densities is bounded above by $r$. On termination we have control of some localised Gowers norms like
\begin{equation*}
\|f\|_{U^3(P_i;P_3)}:=\E_{z \in P_3}{\|f1_{z+P_i}\|_{U^3}} \text{ for }i\in \{1,2\}.
\end{equation*}
Localised Gowers norms of this type are defined in \cite[(2.12)]{pre::0} amongst other places and the additional discussion around that definition may be of interest.

The Generalised von Neumann Theorem (Theorem \ref{thm.gvn}) ensures that control of these localised Gowers norms of a colour class $C$ on the translate $a_C+P_3$ on which $C$ has maximal density $\delta$ leads to
\begin{align*}
&\E_{x \in P_1,y \in P_2, z \in a_C+P_3}{1_C(x)1_C(y)1_C(x+y)1_C(z)1_C(x+z)1_C(y+z)1_C(x+y+z)}\\
& \qquad \qquad \approx \delta^4\E_{x \in P_1,y \in P_2}{1_C(x)1_C(y)1_C(x+y)}.
\end{align*}
Inductively we can ensure that this second term is large for some colour class $C$, and the largeness of that colour class in turn ensures that $\delta$ is large.  This gives a large count of septuples in one colour class as required.

\section{The proof}

We begin by recording some notation for linear forms associated with $(m,p,c)$-sets.  For $p,c \in \N$ and $t \in \N_0$ put
\begin{equation*}
\mathcal{D}_{p,c;t}:=\left\{ i \in \Z^{\N_0}: i_j \in  \{-p,\dots,p\} \text{ for all }j<t; i_{t} =c;\text{ and } i_{j}=0 \text{ for all }j >t\right\}.
\end{equation*}
The sets $\mathcal{D}_{p,c;t}$ as $t$ ranges $\N_0$ are disjoint.  For $m \in \N_0$ put
\begin{equation*}
\mathcal{U}_{m,p,c}:=\bigcup_{t=0}^m{\mathcal{D}_{p,c;t}}.
\end{equation*}
Then for $i \in \mathcal{U}_{m,p,c}$ write
\begin{equation}\label{eqn.lis}
L_i:\Z^{\N_0}\rightarrow \Z; s \mapsto \sum_{j:i_j \neq 0}{s_ji_j},
\end{equation}
which is well-defined since the set of $j$ such that $i_j \neq 0$ has size at most $m+1$ (and in particular is finite) for $i \in \mathcal{U}_{m,p,c}$.

The unique element of $\mathcal{D}_{p,c;t}$ with support of size $1$ is particularly important: we put
\begin{equation*}
i^*(c,t):=(\overbrace{0,\dots,0}^{t \text{ times}},c,0,\dots) \text{ and } \mathcal{D}_{p,c;t}^*:=\mathcal{D}_{p,c;t}\setminus \{i^*(c,t)\}.
\end{equation*}
Suppose that $P_0,\dots,P_m$ are arithmetic progressions.  We are interested in the count
\begin{equation}\label{eqn.qdef}
Q_{m,p,c}(A;P_0,\dots,P_m):=\E_{s_0 \in P_0,\dots,s_m \in P_m}{\prod_{i \in \mathcal{U}_{m,p,c}}{1_A(L_i(s))}},
\end{equation}
since if $P_0,\dots,P_m \subset \N$ then this quantity is non-zero only if $A$ contains an $(m,p,c)$-set.

The following is our key counting/density-increment dichotomy.
\begin{lemma}\label{lem.count}
There are monotone functions $F_2:\N^3 \rightarrow \N$ and $\eta_0:\N^3 \rightarrow (0,1]$ such that the following holds.  Suppose that $m,p,c \in \N$, $\delta\in (0,1]$ and $P_0,\dots , P_m\subset \Z$ are arithmetic progressions of odd length with
\begin{equation}\label{eqn.hyp1}
P_i \subset I_\delta(c\cdot P_m) \text{ for all }0 \leq i \leq m-1;
\end{equation}
$P'' \subset \Z$ is a centred arithmetic progression with
\begin{equation}\label{eqn.hyp2}
P'' \subset I_\delta(P_i) \text{ for all }0 \leq i \leq m;
 \end{equation}
 $P_m \subset \N$ and $A \subset \Z$ has $\alpha:=\E_{x \in c\cdot P_m}{1_A(x)}>0$.  Then at least one of the following holds:
\begin{enumerate}
\item \label{pt.smallN} $|P''| \leq \exp(\delta^{-F_2(m,p,c)})$;
\item \label{pt.ldelta} $\delta \geq \eta_0(m,p,c)$;
\item \label{pt.harder} there is an arithmetic progression of odd length, $P'''\subset \N$, with $I_1(c\cdot P''')\subset I_{F_2(m,p,c)\delta^2}(c\cdot P_m)$, such that
\begin{equation*}
|P'''| \geq |P''|^{\delta^{F_2(m,p,c)}} \text{ and } \E_{x \in c \cdot P'''}{1_A(x)} \geq \alpha+\delta.
\end{equation*}
\item \label{pt.count} or
\begin{align*}
& \left|Q_{m,p,c}(A;P_0,\dots,P_m)\right.\\
& \qquad \qquad \left. - \alpha^{|\mathcal{D}_{p,c;m}|}Q_{m-1,p,c}(A;P_0,\dots,P_{m-1})\right|\leq F_2(m,p,c)\delta^{\eta_0(m,p,c)};
\end{align*}
\end{enumerate}
\end{lemma}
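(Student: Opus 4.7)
The plan is to run a Gowers-norm density-increment dichotomy: use the generalised von Neumann theorem (Theorem \ref{thm.gvn}) to show the count factorises when $1_A$ is Gowers uniform on $c \cdot P_m$, and use Gowers' inverse theorem (Theorem \ref{thm.gi}) when it is not, extracting an arithmetic progression on which $A$ has higher density.

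First I would embed into $\Z/N\Z$ for a prime $N$ of size $O_{m,p,c}(|c \cdot P_m|)$, chosen so that every form value $L_i(s)$ for $s \in P_0 \times \cdots \times P_m$ and $i \in \mathcal{D}_{m,p,c}$ lies inside (the image of) a slight fattening of $c \cdot P_m$ without wraparound; this is possible using (\ref{eqn.hyp1}), (\ref{eqn.hyp2}) and Lemma \ref{lem.triv}(\ref{pt.trivsa}). Define the balanced function $f := (1_A - \alpha) \cdot 1_{c \cdot P_m}$. Separating out the forms $i \in \mathcal{D}_{p,c;m}$ that depend on $s_m$ and expanding each top-layer indicator as $1_A = \alpha 1_{c \cdot P_m} + f$,
\begin{equation*}
Q_{m,p,c}(A;P_0,\dots,P_m) = \E_{s_0,\dots,s_{m-1}}\!\prod_{i \in \mathcal{D}_{m-1,p,c}}\! 1_A(L_i(s)) \cdot \E_{s_m \in P_m}\!\prod_{i \in \mathcal{D}_{p,c;m}}\!\bigl(\alpha 1_{c \cdot P_m} + f\bigr)(L_i(s)).
\end{equation*}
The ``pure-$\alpha$'' term, after using the invariance Lemma \ref{lem.triv}(\ref{pt.trivinv}) on the inner average, contributes the expected main term involving $Q_{m-1,p,c}(A;P_0,\dots,P_{m-1})$ up to $O_{m,p,c}(\delta)$ boundary error; every other term carries at least one $f$-factor. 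I would then verify that the forms $\{L_i : i \in \mathcal{D}_{m,p,c}\}$ are pairwise linearly independent on $\Z^{m+1}$ (direct from the definition of $\mathcal{D}_{p,c;t}$: if $i \in \mathcal{D}_{p,c;t}$ and $i' \in \mathcal{D}_{p,c;t'}$ with $t \neq t'$ use the $c$-coordinate, and if $t = t'$ but $i \neq i'$ combine the $c$-coordinate with the differing lower one), and apply Theorem \ref{thm.gvn} to each mixed term, bounding it by $\|f\|_{U^{k}(\Z/N\Z)}$ for some $k = O_{m,p,c}(1)$.

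Now the dichotomy: set a threshold $\delta_0 := \delta^{C(m,p,c)}$. If $\|f\|_{U^k} \leq \delta_0$, the mixed terms sum to at most $O_{m,p,c}(\delta^{\eta_0(m,p,c)})$, giving conclusion (\ref{pt.count}). Otherwise, apply Theorem \ref{thm.gi} to $f$ to obtain an arithmetic progression $P_j'$ of length $\geq N^{\delta_0^{F_1(k)}}$ with $|\E_{s \in P_j'} f(s)| \geq \delta_0^{F_1(k)}/2$ (after averaging over the partition and a sign choice). Restrict to $c \cdot P_m$, truncate to an odd-length integer progression via Lemma \ref{lem.triv}(\ref{pt.trivint}), and divide by $c$ to produce $P'''$. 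The containment $I_1(c \cdot P''') \subset I_{F_2 \delta^2}(c \cdot P_m)$ follows from Lemma \ref{lem.triv}(\ref{pt.trivdil}),(\ref{pt.trivsa}); the size bound $|P'''| \geq |P''|^{\delta^{F_2}}$ uses $|P''| \leq N \leq |c \cdot P_m|^{O_{m,p,c}(1)}$ together with the negation of conclusion (\ref{pt.smallN}); the density increment $\E_{x \in c \cdot P'''} 1_A(x) \geq \alpha + \delta$ follows from the large average of $f$ on $P'''$ after choosing $C$ so that $\delta_0^{F_1(k)} \gg \delta$.

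The main obstacle will be the uniform bookkeeping: consolidating $k(\Psi)$ from Theorem \ref{thm.gvn}, the exponent $F_1(k)$ from Theorem \ref{thm.gi}, the $O_{m,p,c}(\delta)$ boundary errors from the invariance lemma (applied across all the mixed-term factors and across the reversion from the $\mathcal{D}_{p,c;m}$-forms back to the original count), and the geometric losses of passing from a $\Z/N\Z$-progression to an integer sub-progression of $c \cdot P_m$, all into a single pair of monotone universal functions $(F_2,\eta_0)$, while choosing $C(m,p,c)$ to balance the Case A error against the Case B gain. Conclusion (\ref{pt.ldelta}) absorbs the regime where $\delta$ is comparable to $1$ and these power-savings fail; conclusion (\ref{pt.smallN}) is exactly the condition needed for $N^{\delta_0^{F_1(k)}}$ to exceed $|P''|^{\delta^{F_2}}$ in the output progression length.
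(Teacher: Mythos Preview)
Your overall architecture is right (generalised von Neumann for the count, Gowers inverse for the increment), but the embedding step has a genuine gap. You embed at scale $N = O_{m,p,c}(|c\cdot P_m|)$ and then want to invoke Theorem~\ref{thm.gvn}, but that theorem bounds an average over the \emph{full box} $[N]^{m+1}$, whereas $Q_{m,p,c}$ averages over $P_0\times\cdots\times P_m$. Converting between the two (by inserting $1_{c\cdot P_t}$ into the factor at $i^*(c,t)$, say) costs a factor of $N^{m+1}/(|P_0|\cdots|P_m|)$, and the hypotheses of the lemma do \emph{not} bound this by any function of $m,p,c,\delta$: conditions (\ref{eqn.hyp1})--(\ref{eqn.hyp2}) force only $|P_i|\gtrsim |P''|/\delta$, with no matching upper bound on $|P_m|$ in terms of $|P''|$, so $|P_i|/|c\cdot P_m|$ can be arbitrarily small. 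Hence even when $\|f\|_{U^k}\le\delta_0$ the mixed terms are not controlled, and conclusion (\ref{pt.count}) does not follow.

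The paper's repair is to embed at the \emph{smallest} available scale, taking the prime $N\asymp_{m,p,c}|P''|$. The bridge is a translation--averaging trick: one inserts an extra inner average over $s'\in(d''\cdot[N''])^{m+1}$ (legitimate up to $O_{m,p,c}(\delta)$ by (\ref{eqn.hyp2}) and Lemma~\ref{lem.triv}(\ref{pt.trivinv})), then \emph{freezes} the outer variables $s_0\in P_0,\dots,s_m\in P_m$ and applies Theorem~\ref{thm.gvn} only to the inner average, which now ranges over a genuine full box modulo a prime of size $\asymp N''$. The relevant Gowers norm is therefore that of a \emph{translated} balanced function of the form $\tau_{L_j(s)}(1_A-\alpha1_{c\cdot P_m})\circ\lambda_{d''}$; after averaging over $s$ one must locate a specific $s$ at which this norm is large \emph{and} the translated function still has nearly zero mean on $d''\cdot[N]$ --- whence the preliminary ``easy increment'' branch around (\ref{eqn.small}) and the auxiliary set $\mathcal{S}$, neither of which your single global $f$ requires but which are forced by the $s$-dependence. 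Working at this small scale is also exactly what makes the containment in conclusion (\ref{pt.harder}) automatic: the output progression from Theorem~\ref{thm.gi} has extent at most $d''N=O_{m,p,c}(|P''|)\le O_{m,p,c}(\delta^2|c\cdot P_m|)$, whereas at your scale the progression produced could have extent comparable to $|c\cdot P_m|$ itself, and $I_1(c\cdot P''')\subset I_{F_2\delta^2}(c\cdot P_m)$ would fail.
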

The proof below is long but not at all conceptually difficult.  The length is a result of taking care with technicalities and somewhat licentious notation.  The basic idea is to use Theorem \ref{thm.gvn} to control the $Q$s by suitable uniformity norms and then Theorem \ref{thm.gi} to show that if that error is not small then there is a density increment.  There are two types of density increment, one is the expected increment resulting from large $U^k$ norm in Theorem \ref{thm.gi}.  The other results from ensuring that the density of $A$ is the same on two progressions, one of which is a small dilate of the other.  This second increment is common to arguments where groups are replaced by approximate groups -- in this case progressions -- and they perhaps originate in the work of Bourgain \cite{bou::5}.  (See \cite[(10.16)]{taovu::} and the definition of the function $G$ there.)
\begin{proof}
With $L_i$s defined as in (\ref{eqn.lis}) let $\Psi:=(L_i)_{i \in \mathcal{U}_{m,p,c}}$ so that $\Psi:\Z^{m+1} \rightarrow \Z^{\mathcal{U}_{m,p,c}}$ is a homomorphism.  Every $(L_i,L_j)$ with $i \neq j$ is a pair of independent vectors, so by Theorem \ref{thm.gvn} applied to $\Psi$ there is $k=k(\Psi)=O_{m,p,c}(1)$ such that if $N \geq N_0(\Psi)$ is prime and $h$ is a vector of functions $\Z/N\Z \rightarrow \C$ (indexed by $\mathcal{U}_{m,p,c}$) then
\begin{equation}\label{eqn.cnt}
\left|\E_{x \in [N]^{m+1}}{\prod_{i \in \mathcal{U}_{m,p,c}}{h_i(L_i(x)+N\Z)}}\right|\leq \inf_{i \in \mathcal{U}_{m,p,c}}{\|h_i\|_{U^{k}(\Z/N\Z)}}.
\end{equation}
Note that this will be applied with a vector $h$ to be determined, but which will be a combination of translates of the set $A$ suitably restricted, and also translates of the balanced function of $A$.  The particular choice is made in (\ref{eqn.hdef}) (which itself depends on (\ref{eqn.gdef}) and (\ref{eqn.fdef})).

As remarked in the subsection on big-$O$ notation, since $m$, $p$ and $c$ are in $\N$ we see that there is a monotone function $F:\N^3 \rightarrow \N$ such that $F(m,p,c) \geq N_0(\Psi)$.  (We shall take $F_2 \geq F$, but there are other functions later which will determine exactly what $F_2$ needs to be.)

Since $P''$ is a centred arithmetic progression there are natural numbers $d''$ and $N''$ such that $P''=d''\cdot \{-N'',\dots,N''\}$.  By Bertrand's postulate there is a prime $N$ such that
\begin{equation*}
\max\{(mp+c)N'',F(m,p,c)\} <N =O_{m,p,c}(N'').
\end{equation*}
The reason for this choice will become clear just before (\ref{eqn.newrhs}) below.  Before that we record the following claim.
\begin{claim*}
There is $\delta'=O_{m,p,c}(\delta)$ such that if $s_0 \in P_0, \dots ,s_{m-1} \in P_{m-1}$, $x \in d''\cdot \{-N,\dots,N\}$ and $i \in \mathcal{D}_{p,c;m}$ then
\begin{equation*}
L_i(s)+x-cs_m \in c\cdot I_{\delta'}(P_m) 
\end{equation*}
and if additionally $s_m \in \Int_{\delta'}(P_m)$ then
\begin{equation*}
L_i(s)+x \in c \cdot P_m.
\end{equation*}
\end{claim*}
\begin{proof}
Write $l=\left\lceil \frac{N}{N''}\right\rceil$ so by (\ref{eqn.hyp1}), (\ref{eqn.hyp2}) and Lemma \ref{lem.triv} we have
\begin{align*}
d''\cdot \{-N,\dots,N\} &\subset {\overbrace{(d''\cdot \{-N'',\dots,N''\})+\cdots + (d''\cdot  \{-N'',\dots,N''\})}^{l\text{ times}}}\\ & = lP'' \subset I_{l\delta}(P_0) \subset I_{l\delta}(I_\delta(c\cdot P_m)) \subset I_{l\delta^2}(c\cdot P_m).
\end{align*}
If follows that if $i \in \mathcal{D}_{p,c;m}$, $s_0\in P_0,\dots,s_{m-1} \in P_{m-1}$, and $x \in d''\cdot \{-N,\dots,N\}$, then by (\ref{eqn.hyp1}) and Lemma \ref{lem.triv} we have
\begin{equation}\label{eqn.kul}
i_0s_0 + \cdots + i_{m-1}s_{m-1} + x \in mpI_\delta(c\cdot P_m)+I_{l\delta^2}(c\cdot P_m)\subset I_{(mp+l\delta)\delta}(c\cdot P_m).
\end{equation}
Let $\delta':=(mp+l\delta)\delta=O_{m,p,c}(\delta)$.  If $i \in \mathcal{D}_{p,c;m}$ we have $i_m=c$ and then by (\ref{eqn.kul}) and Lemma \ref{lem.triv} we have
\begin{equation*}
L_i(s)+x  -cs_m= i_0s_0 + \cdots + i_{m-1}s_{m-1} +x \in I_{\delta'}(c\cdot P_m) = c\cdot I_{\delta'}(P_m)
\end{equation*}
giving the first conclusion.  Finally, if $s_m \in \Int_{\delta'}(P_m)$ then by Lemma \ref{lem.triv} again
\begin{equation*}
L_i(s)+x  \in cs_m + c\cdot I_{\delta'}(P_m) \subset c\cdot (\Int_{\delta'}(P_m) +I_{\delta'}(P_m) ) \subset c\cdot P_m.
\end{equation*}
The claim is proved.
\end{proof}
Let $\epsilon>0$ be a further constant to be optimised later and suppose (using $\tau$ for translation as defined in \S\ref{ssec.tool}) that for some $j \in \mathcal{D}_{p,c;m}$ we have
\begin{equation}\label{eqn.small}
\E_{s_0 \in P_0,\dots,s_m \in P_m}{1_{\Int_{\delta'}(P_m)}(s_m)\left|\E_{x \in d''\cdot [N]}{\tau_{L_j(s)}(1_A - \alpha 1_{c \cdot P_m})(x)}\right|} > \epsilon.
\end{equation}
By interchanging order of summation we have
\begin{align}
\label{eqn.split} & \left|\E_{s_0 \in P_0,\dots,s_m \in P_m}{1_{\Int_{\delta'}(P_m)}(s_m)\E_{x \in d''\cdot [N]}{\tau_{L_j(s)}(1_A - \alpha 1_{c \cdot P_m})(x)}}\right|\\
\nonumber& \qquad  = \left|\E_{s_0 \in P_0,\dots,s_{m-1} \in P_{m-1},x \in d''\cdot [N]}{}\right.\\
\nonumber & \qquad \qquad \qquad \quad\left(\E_{s_m \in P_m}{1_{\Int_{\delta'}(P_m)}(s_m)\tau_{L_j(s)+x-cs_m}1_A (cs_m)}\right.\\
\nonumber & \qquad \qquad \qquad  \qquad \qquad  \qquad \left.\left.-\alpha\E_{s_m \in P_m}{1_{\Int_{\delta'}(P_m)}(s_m)\tau_{L_j(s)+x-cs_m}(1_{c\cdot P_m})(cs_m )}\right)\right|.
\end{align}
Suppose that $s_0 \in P_0,\dots,s_{m-1} \in P_{m-1}$ and $x \in d''\cdot [N]$.  The claim and Lemma \ref{lem.triv} tell us that
\begin{align*}
\E_{s_m \in P_m}{1_{\Int_{\delta'}(P_m)}(s_m)\tau_{L_j(s)+x-cs_m}1_A (cs_m)}  & \leq\E_{s_m \in P_m}{\tau_{L_j(s)+x-cs_m}1_A (cs_m)}\\
& \leq  \E_{s_m \in P_m}{1_A(cs_m)}+ 2\delta' = \alpha+2\delta';
\end{align*}
and
\begin{align*}
\E_{s_m \in P_m}{1_{\Int_{\delta'}(P_m)}(s_m)\tau_{L_j(s)+x-cs_m}1_A (cs_m)}  & \geq\E_{s_m \in P_m}{\tau_{L_j(s)+x-cs_m}1_A (cs_m)} -\delta'\\
& \geq  \E_{s_m \in P_m}{1_A(c\cdot s_m)}-3\delta' = \alpha-3\delta'.
\end{align*}
We conclude that
\begin{equation*}
|\E_{s_m \in P_m}{1_{\Int_{\delta'}(P_m)}(s_m)\tau_{L_j(s)+x-cs_m}1_{A} (cs_m)}- \alpha| = O(\delta')=O_{m,p,c}(\delta),
\end{equation*}
and similarly
\begin{equation*}
|\E_{s_m \in P_m}{1_{\Int_{\delta'}(P_m)}(s_m)\tau_{L_j(s)+x-cs_m}1_{c\cdot P_m} (cs_m)}- 1| = O(\delta')=O_{m,p,c}(\delta)
\end{equation*}
for all $s_0 \in P_0,\dots,s_{m-1} \in P_{m-1}$ and $x \in d''\cdot [N]$.  It follows from these and (\ref{eqn.split}) that
\begin{equation*}
\left|\E_{s_0 \in P_0,\dots,s_m \in P_m}{1_{\Int_{\delta'}(P_m)}(s_m)\E_{x \in d''\cdot [N]}{\tau_{L_j(s)}(1_A - \alpha 1_{c \cdot P_m})(x)}}\right|=O_{m,p,c}(\delta).
\end{equation*}
Combining this with (\ref{eqn.small}) and averaging we see that there are elements $s_0 \in P_0,\dots,s_{m-1} \in P_{m-1},s_{m} \in \Int_{\delta'}(P_m)$ such that
\begin{equation}\label{eqn.cd}
\E_{x \in d''\cdot [N]}{\tau_{L_j(s)}(1_A - \alpha 1_{c \cdot P_m})(x)}> \frac{1}{2}\epsilon - O_{m,p,c}(\delta).
\end{equation}
Since $s_m \in \Int_{\delta'}(P_m)$ the claim tells us
\begin{equation*}
\tau_{L_j(s)}(1_{c\cdot P_m})(x) = 1_{c\cdot P_m}(L_j(s)+x) =1 \text{ for all }x \in d''\cdot \{-N,\dots,N\},
\end{equation*}
and so $L_j(s)-d''\cdot \{-N,\dots,N\} \subset c\cdot P_m \subset \N$.  It follows from this that $c$ divides $L_j(s)$ and of course $c$ divides $d''$ (since $P'' \subset I_1(P_0) \subset I_1(c\cdot P_m)=c\cdot I_1(P_m)$ by Lemma \ref{lem.triv}).  From (\ref{eqn.cd}) we then have
\begin{equation*}
\E_{x \in c\cdot (c^{-1}L_j(s)-(c^{-1}d'')\cdot [N])}{1_A(x)} >\alpha + \frac{1}{2}\epsilon - O_{m,p,c}(\delta).
\end{equation*}
We can take $\epsilon \in [\delta,O_{m,p,c}(\delta)]$ such that the right hand side is at least $\alpha+\delta$, and we are in case (\ref{pt.harder}) of the lemma (with $P''':=c^{-1}L_j(s)-(c^{-1}d'') \cdot [N]$ so $|P'''|=N \geq N''=|P''|$ where $|P'''|$ is odd since it is prime, and 
\begin{equation*}
I_1(c\cdot P''') =I_1(d''\cdot [N])\subset I_{O_{m,p,c}(1)}(P'') \subset I_{O_{m,p,c}(\delta^2)}(c\cdot P_m)
\end{equation*}
by Lemma \ref{lem.triv}.  Again by Lemma \ref{lem.triv} and the discussion in the section on big-$O$ notation there is a monotone function $F':\N^3 \rightarrow \N$ such that $I_1(c\cdot P''') \subset I_{F(m,p,c)\delta^2}(c\cdot P_m)$.  And, again, we shall take $F_2 \geq F$.)\footnote{This is much stronger than the conclusion offered in case (\ref{pt.harder}) but this is because this is the easy density increment mentioned at the end of the discussion before the proof of this lemma.}

In view of the above we may suppose that (\ref{eqn.small}) does not happen for any $j \in \mathcal{D}_{p,c;m}$; we are in the main case.

We split the integrand in (\ref{eqn.qdef}) into two factors
\begin{equation*}
\prod_{i \in \mathcal{U}_{m,p,c}}{1_A(L_i(s))}=\left(\prod_{t=0}^{m-1}{\prod_{i \in \mathcal{D}_{p,c;t}}{1_A(L_i(s))}}\right)\cdot \left(\prod_{i \in \mathcal{D}_{p,c;m}}{1_A(L_i(s))}\right).
\end{equation*}
The first term on the right is independent of $s_m$; we decompose the second through an arbitrary fixed total order on $\mathcal{D}_{p,c;m}^*$.  For $i,j \in \mathcal{D}_{p,c;m}^*$ put
\begin{equation}\label{eqn.fdef}
f_{j,i}:=\begin{cases} 1_A & \text{ if }i<j\\
1_A - \alpha 1_{c\cdot P_m} & \text{ if } i=j\\
\alpha 1_{c\cdot P_m} & \text{ if }i>j\end{cases};
\end{equation}
so for all $x \in \Z^{\mathcal{D}_{p,c;m}}$ we have
\begin{align*}
& \sum_{j \in \mathcal{D}_{p,c;m}^*}{1_A(x_{i^*(c,m)})\prod_{i \in \mathcal{D}_{p,c;m}^*}{f_{j,i}(x_i)}}\\ & \qquad \qquad = \prod_{i \in \mathcal{D}_{p,c;m}}{1_A(x_i)} -\alpha^{|\mathcal{D}_{p,c;m}^*|}1_A(x_{i^*(c,m)})\prod_{i \in \mathcal{D}_{p,c;m}^*}{1_{c\cdot P_m}(x_i)}.
\end{align*}
It follows that
\begin{align}
\label{eqn.i}&Q_{m,p,c}(A;P_0,\dots,P_m)\\
\nonumber & \qquad -  \alpha^{|\mathcal{D}_{p,c;m}^*|}\E_{s_0 \in P_0,\dots,s_{m-1} \in P_{m-1}}{\left(\prod_{t=0}^{m-1}{\prod_{i \in \mathcal{D}_{p,c;t}}{1_A(L_i(s))}}\right)}\\
\nonumber & \qquad \qquad\qquad\qquad\qquad\qquad\qquad\qquad\times\left(\E_{s_m \in P_m}{1_A(L_{i^*(c,m)}(s))\prod_{i \in \mathcal{D}_{p,c;m}^*}{1_{c\cdot P_m}(L_i(s))}}\right)\\
\nonumber &\qquad \qquad  = \sum_{j \in \mathcal{D}_{p,c;m}^*}{\E_{s_0 \in P_0,\dots,s_{m} \in P_{m}}{\left(\prod_{t=0}^{m-1}{\prod_{i \in \mathcal{D}_{p,c;t}}{1_A(L_i(s))}}\right)}}\\
\nonumber &\qquad \qquad \qquad\qquad\qquad\qquad\qquad\qquad\qquad \times \left(1_A(L_{i^*(c,m)}(s))\prod_{i \in \mathcal{D}_{p,c;m}^*}{f_{j,i}(L_i(s))}\right).
\end{align}
On the other hand for all $i \in \mathcal{D}_{p,c;m}$ and $s \in P_0 \times \cdots \times P_{m-1} \times \Int_{\delta'}(P_m)$ we have from the claim (with $x=0$) that $L_i(s) \in c\cdot P_m$.  Hence for all $s_0 \in P_0,\dots,s_{m-1} \in P_{m-1}$ we have
\begin{align*}
& \E_{s_m \in P_m}{1_A(L_{i^*(c,m)}(s))\prod_{i \in \mathcal{D}_{p,c;m}^*}{1_{c\cdot P_m}(L_i(s))}}\\
&\qquad \qquad \geq \E_{s_m \in P_m}{1_A(cs_m)1_{c\cdot \Int_{\delta'}(P_m)}(cs_m)\prod_{i \in \mathcal{D}_{p,c;m}^*}{1_{c\cdot P_m}(L_i(s))}}\\
& \qquad \qquad =  \E_{s_m \in P_m}{1_A(cs_m)1_{c\cdot \Int_{\delta'}(P_m)}(cs_m)}\geq  \alpha-O_{m,p,c}(\delta).
\end{align*}
On the other hand
\begin{equation*}
 \E_{s_m \in P_m}{1_A(L_{i^*(c,m)}(s))\prod_{i \in \mathcal{D}_{p,c;m}^*}{1_{c\cdot P_m}(L_i(s))}}\leq  \E_{s_m \in P_m}{1_A(L_{i^*(c,m)}(s))} = \alpha,
\end{equation*}
and so
\begin{equation*}
 \E_{s_m \in P_m}{1_A(L_{i^*(c,m)}(s))\prod_{i \in \mathcal{D}_{p,c;m}^*}{1_{c\cdot P_m}(L_i(s))}}= \alpha+O_{m,p,c}(\delta).
\end{equation*}
Moreover,
\begin{equation*}
Q_{m-1,p,c}(A;P_0,\dots,P_{m-1})=\E_{s_0 \in P_0,\dots,s_{m-1} \in P_{m-1}}{\left(\prod_{t=0}^{m-1}{\prod_{i \in \mathcal{D}_{p,c;t}}{1_A(L_i(s))}}\right)}
\end{equation*}
since the sets $\mathcal{D}_{p,c;t}$ are disjoint over $0 \leq t <m$.  We conclude that the left hand side of (\ref{eqn.i}) is equal to
\begin{equation*}
Q_{m,p,c}(A;P_0,\dots,P_m) - \alpha^{|\mathcal{D}_{p,c;m}|}Q_{m-1,p,c}(A;P_0,\dots,P_m)+O_{m,p,c}(\delta).
\end{equation*}
To estimate the right hand side of (\ref{eqn.i}) first note (by (\ref{eqn.hyp2}) and Lemma \ref{lem.triv}) that for $j \in \mathcal{D}_{p,c;m}^*$ the summand equals
\begin{align}
\label{eqn.innerexpect}&\E_{s_0 \in P_0,\dots,s_{m} \in P_{m}}{\E_{s_0',\dots,s_m' \in d''\cdot [N'']}{\left(\prod_{t=0}^{m-1}{\prod_{i \in \mathcal{D}_{p,c;t}}{1_A(L_i(s+s'))}}\right)}}\\
\nonumber &\qquad \qquad\qquad  \qquad\qquad \qquad\qquad  \times \left(1_A(L_{i^*(c,m)}(s+s'))\prod_{i \in \mathcal{D}_{p,c;m}^*}{f_{j,i}(L_i(s+s'))}\right)\\
\nonumber &  \qquad \qquad\qquad  \qquad\qquad \qquad\qquad\qquad \qquad\qquad+ O_{m}(\delta).
\end{align}
We shall look at the inner expectation of the first term here for which it will be useful to introduce some more notation.  We shall use $\lambda$ for dilation in the way defined in \S\ref{ssec.tool}, and then for $s \in P_0 \times \cdots \times P_m$ and $y \in \Z^{m+1}$ put
\begin{equation}\label{eqn.gdef}
g_i^{(s)}(y)=\begin{cases}
\tau_{L_i(s)}(1_A) \circ \lambda_{d''}(y) & \text{ if }i \in \mathcal{U}_{m-1,p,c} \text{ or }i=i^*(c,m)\\
\tau_{L_i(s)}(f_{j,i})\circ \lambda_{d''}(y) & \text{ if } i \in \mathcal{D}_{p,c;m}^*.
\end{cases}
\end{equation}
With this notation, the inner expectation in (\ref{eqn.innerexpect}) equals
\begin{align*}
& \E_{y_0,\dots,y_m \in [N'']}{\prod_{i \in \mathcal{U}_{m,p,c}}{g_i^{(s)}(L_i(y))}}\\
& \qquad = \frac{1}{(N'')^{m+1}} \cdot \sum_{y \in \Z^{m+1}}{\left(\prod_{t=0}^m{1_{[N'']}(y_t)}\right)\left(\prod_{t=0}^m{\prod_{i \in \mathcal{D}_{p,c;t}^*}{g_i^{(s)}(L_i(y))}}\right)\left(\prod_{t=0}^m{g_{i^*(c,t)}^{(s)}(L_{i^*(c,t)}(y))}\right)}\\
& \qquad = \frac{1}{(N'')^{m+1}} \cdot \sum_{y \in \Z^{m+1}}{\left(\prod_{t=0}^m{\prod_{i \in \mathcal{D}_{p,c;t}^*}{g_i^{(s)}(L_i(y))}}\right)\left(\prod_{t=0}^m{g_{i^*(c,t)}^{(s)}|_{c\cdot [N'']}(L_{i^*(c,t)}(y))}\right)},
\end{align*}
since $L_{i^*(c,t)}(y)=cy_t$ for all $0 \leq t \leq m$.  The notation is potentially a little confusing here: $g_{i^*(c,t)}^{(s)}|_{c\cdot [N'']}$ denotes the function $g_{i^*(c,t)}^{(s)}$ restricted to the set $c\cdot [N'']$.

For $x \in [N]$ write
\begin{equation}\label{eqn.hdef}
h_i^{(s)}(x+N\Z)=\begin{cases}
g_i^{(s)}(x) & \text{ if }i \in \bigcup_{t=0}^m{\mathcal{D}_{p,c;t}^*} \\
g_{i}^{(s)}|_{c \cdot [N'']}(x) & \text{ if }i \in \{i^*(c,t):0 \leq t \leq m\}
\end{cases}.
\end{equation}
In view of this definition, for $x \in [N]^{m+1}$, the product
\begin{equation*}
\prod_{i \in \mathcal{U}_{m,p,c}}{h_i^{(s)}(L_i(x)+N\Z)}
\end{equation*}
is non-zero only if $x \in ((c\cdot [N'']+N\Z)^{m+1})\cap ([N]^{m+1})$.  This set equals $(c\cdot [N''])^{m+1}$ since $N >cN''$.  Now, if $x \in (c\cdot [N''])^{m+1}$ then $L_i(x) \in [N]$ since $N > (mp+c)N''$ and so $h_i^{(s)}(x+N\Z)=g_i^{(s)}(x)$.  It follows that
\begin{equation}\label{eqn.newrhs}
\sum_{x \in [N]^{m+1}}{\prod_{i \in \mathcal{U}_{m,p,c}}{h_i^{(s)}(L_i(x)+N\Z)}} = (N'')^{m+1}\cdot \E_{x_0,\dots,x_m \in [N'']}{\prod_{i \in \mathcal{U}_{m,p,c}}{g_i^{(s)}(L_i(x))}}.
\end{equation}
Apply (\ref{eqn.cnt}) to the above and conclude that the right hand side of (\ref{eqn.i}) is at most
\begin{align*}
&\sum_{j \in \mathcal{D}_{p,c;m}^*}{\E_{s_0 \in P_0,\dots,s_{m} \in P_{m}}{\left(\frac{N}{N''}\right)^{m+1}\left\|h_j^{(s)}\right\|_{U^k(\Z/N\Z)}}}+ O_{m,p,c}(\delta).
\end{align*}
Let $\eta:=(4\sqrt{\epsilon})^{\frac{1}{F_1(k)}} + \sqrt{\epsilon} + \delta'$ (where $F_1$ is as in Theorem \ref{thm.gi}) and suppose
\begin{equation}\label{eqn.earear}
\sum_{j \in \mathcal{D}_{p,c;m}^*}{\E_{s_0 \in P_0,\dots,s_m \in P_m}{\left\|h_j^{(s)}\right\|_{U^k(\Z/N\Z)}}} < \eta |\mathcal{D}_{p,c;m}^*|.
\end{equation}
Then it follows that
\begin{equation*}
\left|Q_{m,p,c}(A;P_0,\dots,P_m) - \alpha^{|\mathcal{D}_{p,c;m}|}Q_{m-1,p,c}(A;P_0,\dots,P_m)\right| = O_{m,p,c}(\delta) + O_{m,p,c}(\eta),
\end{equation*}
and we will find ourselves in case (\ref{pt.count}) in view of the definition of $\eta$ and choice of $\epsilon$ earlier.  On the other hand suppose that (\ref{eqn.earear}) does not hold, so that by averaging there is some $j \in \mathcal{D}_{p,c;m}^*$ such that
\begin{equation}\label{eqn.upit}
\E_{s_0 \in P_0,\dots,s_m \in P_m}{\left\|h_j^{(s)}\right\|_{U^k(\Z/N\Z)}} \geq \eta.
\end{equation}
Since (\ref{eqn.small}) does not happen, writing
\begin{equation*}
\mathcal{S}:=\{s \in P_0 \times \cdots \times P_m: s_m \in \Int_{\delta'}(P_m) \text{ and } \left|\E_{x \in d''\cdot [N]}{\tau_{L_j(s)}(1_A - \alpha 1_{c \cdot P_m})(x)}\right|>\sqrt{\epsilon} \}
\end{equation*}
we have
\begin{equation*}
\E_{s_0 \in P_0,\dots,s_m \in P_m}{1_{\mathcal{S}}(s)\sqrt{\epsilon}} \leq \epsilon.
\end{equation*}
 By (\ref{eqn.upit}), Lemma \ref{lem.triv}, the value of $\eta$ and the triangle inequality we see that
\begin{equation*}
\E_{s_0 \in P_0,\dots,s_m \in P_m}{1_{\Int_{\delta'}(P_m)}(s_m)1_{\mathcal{S}^c}(s)\left\|h_j^{(s)}\right\|_{U^k(\Z/N\Z)}} \geq \eta - \sqrt{\epsilon} - \delta' \geq (4\sqrt{\epsilon})^{\frac{1}{F_1(k)}}.
\end{equation*}
By averaging there is some $s \in (P_0\times \cdots \times P_{m-1}\times \Int_{\delta'}(P_m))\setminus \mathcal{S}$ such that
\begin{equation*}
\left\|h_j^{(s)}\right\|_{U^k(\Z/ N\Z)}\geq (4\sqrt{\epsilon})^{\frac{1}{F_1(k)}}.
\end{equation*}
By Theorem \ref{thm.gi} (applicable since $4\sqrt{\epsilon} \leq 2^{-F_1(k)}$ as otherwise we are in case (\ref{pt.ldelta}) of the Lemma since $\epsilon \geq \delta$) there is a partition of $[N]$ into arithmetic progressions $Q_1,\dots,Q_M$ of average size at least $N^{\epsilon^{F_1(k)}}$ such that
\begin{equation}\label{eqn.yy}
\sum_{l=1}^M{\left|\sum_{x \in Q_l}{\tau_{L_j(s)}(f_{j,j}) \circ \lambda_{d''}(x)}\right|} =\sum_{l=1}^M{\left|\sum_{x \in Q_l}{g_j^{(s)}(x)}\right|} =\sum_{l=1}^M{\left|\sum_{x \in Q_l}{h_j^{(s)}(x+N\Z)}\right|} \geq 4\sqrt{\epsilon}N.
\end{equation}
Of course
\begin{align*}
\sum_{l=1}^M{\sum_{x \in Q_l}{\tau_{L_j(s)}(f_{j,j})\circ \lambda_{d''}(x)}} & = \sum_{x \in d''\cdot [N]}{\tau_{L_j(s)}(f_{j,j})(x)}\\
& =  \sum_{x \in d''\cdot [N]}{\tau_{L_j(s)}(1_A - \alpha 1_{c \cdot P_m})(x)},
\end{align*}
and so (since $s \not \in \mathcal{S}$)
\begin{equation*}
\left|\sum_{l=1}^M{\sum_{x \in Q_l}{\tau_{L_j(s)}(f_{j,j})\circ \lambda_{d''}(x)}}\right| \leq \sqrt{\epsilon }N.
\end{equation*}
Moreover, since the average size of $Q_l$ is at least $N^{\epsilon^{F_1(k)}}$, we have
\begin{equation*}
\sum_{\substack{1 \leq l \leq M\\|Q_l| \leq N^{\epsilon^{F_1(k)/2}}}}{\left|\sum_{x \in Q_l}{\tau_{L_j(s)}(f_{j,j}) \circ \lambda_{d''}(x)}\right|} \leq N^{\epsilon^{F_1(k)/2}}M \leq N^{-\epsilon^{F_1(k)/2}}N \leq \sqrt{\epsilon }N,
\end{equation*}
since we may assume $N^{-\epsilon^{F_1(k)/2}}\leq \sqrt{\epsilon}$ (or else we are in case (\ref{pt.smallN}) of the Lemma since $\epsilon \geq \delta$ and $N \geq N''=|P''|$).  We may assume all the $Q_l$s are of odd size by removing at most one point from each at a cost of at most $M$ in (\ref{eqn.yy}).  (Again $M\leq \sqrt{\epsilon}N$ or else we are in case (\ref{pt.smallN}) of the Lemma.)  It follows by the triangle inequality and averaging that there is some $1 \leq l \leq M$ with
\begin{equation*}
\sum_{x \in Q_l}{\tau_{L_j(s)}(f_{j,j}) \circ \lambda_{d''}(x)} \geq \frac{1}{2}\sqrt{\epsilon }|Q_l| \text{ and } |Q_l| > N^{\epsilon^{F_1(k)}/2}.
\end{equation*}
Rewriting the first expression we get that
\begin{align*}
\E_{x \in d''\cdot Q_l}{1_A(L_j(s) + x)} & \geq \alpha \E_{x \in d''\cdot Q_l}{1_{c\cdot P_m}(L_j(s)+x)} + \frac{1}{2}\sqrt{\epsilon} = \alpha + \frac{1}{2}\sqrt{\epsilon}
\end{align*}
by the claim.  First, $\frac{1}{2}\sqrt{\epsilon }\geq 2\delta$ (or else we are in case (\ref{pt.ldelta}) of the Lemma in view of the fact that $\epsilon \geq \delta$).  Secondly, as noted after (\ref{eqn.cd}), $L_j(s)-d''\cdot \{-N,\dots,N\} \subset c\cdot P_m \subset \N$, $c$ divides $L_j(s)$ and $c$ divides $d''$, and so
\begin{equation*}
\E_{x \in c^{-1}L_j(s)-(c^{-1}d'')\cdot Q_l}{1_A( x)} \geq \alpha + \delta,
\end{equation*}
and putting $P''':=c^{-1}L_j(s)-(c^{-1}d'')\cdot Q_l \subset \N$ we are in case (\ref{pt.harder}) and hence are done.  (Indeed, $|P'''|=|Q_l| \geq N^{\epsilon^{F_1(k)}/2} \geq N^{\delta^{F_1(k)}/2}$, and $|Q_l|$ is odd.  As before
\begin{equation*}
I_1(c\cdot P''') =I_1(d''\cdot [N])\subset I_{O_{m,p,c}(1)}(P'') \subset I_{O_{m,p,c}(\delta^2)}(c\cdot P_m)
\end{equation*}
by Lemma \ref{lem.triv}.  Again by Lemma \ref{lem.triv} and the discussion in the section on big-$O$ notation there is a monotone function $F':\N^3\rightarrow \N$ such that $I_1(c\cdot P''') \subset I_{F(m,p,c)\delta^2}(c\cdot P_m)$.  And, again, we shall take $F_2 \geq F$.)

The lemma is proved.
\end{proof}

Our main result is the following theorem.
\begin{theorem}\label{thm.key}
Suppose that $m,p,c,r \in \N$, $\delta \in (0,1]$; $P \subset \N$ is an arithmetic progression of odd length $N$; and $\mathcal{C}$ is an $r$-colouring of $I_1(P)\cap \N$.  Then at least one of the following holds.
\begin{enumerate}
\item \label{cs.n3b} $N \leq \exp(\exp(\delta^{-O_{m,p,c}(1)}))$;
\item \label{cs.d2b} $\delta \geq (2r)^{-O_{m,p,c}(1)}$;
\item \label{cs.conc} there are progressions $P_0,\dots, P_m \subset I_\delta(c\cdot P)\cap \N$ of odd length with
\begin{equation*}
P_i \subset I_1(P_{i+1}) \text{ for all }0 \leq i \leq m-1 \text{ and } |P_0| \geq N^{\exp(-\delta^{-O_{m,p,c}(1)})},
\end{equation*}
and some $C \in \mathcal{C}$ such that
\begin{equation*}
Q_{m,p,c}(C;P_0,\dots,P_m)\geq (2r)^{-O_{m,p,c}(1)}.
\end{equation*}
\end{enumerate}
\end{theorem}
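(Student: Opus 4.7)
The plan is to prove Theorem~\ref{thm.key} by induction on $m$, combining an outer density-increment loop with a reduction in $m$ coming from Lemma~\ref{lem.count}. For the induction to close it is essential to strengthen the statement: I would instead prove that for any colour $C\in\mathcal{C}$ having density at least $1/r$ of $C$ on $c\cdot P$, either case~(\ref{cs.n3b}) or (\ref{cs.d2b}) of the theorem holds, or there are progressions $P_0,\dots,P_m$ satisfying case~(\ref{cs.conc}) with $Q_{m,p,c}(C;P_0,\dots,P_m)\geq (2r)^{-g(m,p,c)}$ for some monotone $g$ and the same prescribed $C$. The theorem then follows from this strengthened form by pigeonhole. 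The base case $m=0$ is essentially pigeonhole: take $P_0$ to be an odd-length sub-progression of $I_\delta(c\cdot P)\cap \N$ of the largest size permitted by Lemma~\ref{lem.triv}, and note that $Q_{0,p,c}(C;P_0)$ is just the density of $C$ on $c\cdot P_0$; either this density is already $\geq (2r)^{-O(1)}$ or one runs a simpler version of the argument below (density-increment without the counting term) to boost it, using $c\cdot P_0\subset I_1(P)\cap\N$ once $\delta\leq 1/c$ (else we are in case~(\ref{cs.d2b})).

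For the inductive step, fix $C$ with density $\geq 1/r$ of $C$ on $c\cdot P$ and maintain a ``current'' top-level progression $P_m^{(t)}\subset I_\delta(c\cdot P)\cap\N$ on which $C$ has density $\alpha^{(t)}\geq 1/r$. At iteration $t$: first invoke the strengthened inductive hypothesis with $P_m^{(t)}$ in place of $P$ and the same colour $C$, obtaining progressions $P_0^{(t)},\dots,P_{m-1}^{(t)}\subset I_\delta(c\cdot P_m^{(t)})\cap\N$ satisfying the nesting and size bounds of case~(\ref{cs.conc}) with $Q_{m-1,p,c}(C;P_0^{(t)},\dots,P_{m-1}^{(t)}) \geq (2r)^{-g(m-1,p,c)}$. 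Then pick a centred $P''$ of appropriate size inside $I_\delta(P_0^{(t)})$ and apply Lemma~\ref{lem.count} to $(P_0^{(t)},\dots,P_{m-1}^{(t)},P_m^{(t)},P'';C)$.

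The four cases of Lemma~\ref{lem.count} are dispatched as follows: case~(\ref{pt.smallN}) combined with our choice of $|P''|$ forces case~(\ref{cs.n3b}) of the theorem; case~(\ref{pt.ldelta}) gives case~(\ref{cs.d2b}); case~(\ref{pt.harder}) produces $P'''$ with density $\alpha^{(t)}+\delta$ of $C$ on $c\cdot P'''$ and $I_1(c\cdot P''')\subset I_{F_2(m,p,c)\delta^2}(c\cdot P_m^{(t)})$, which we take as the new $P_m^{(t+1)}$; and case~(\ref{pt.count}) gives
\begin{equation*}
Q_{m,p,c}(C;P_0^{(t)},\dots,P_m^{(t)}) \geq (\alpha^{(t)})^{|\mathcal{D}_{m,p,c}|}\,Q_{m-1,p,c}(C;P_0^{(t)},\dots,P_{m-1}^{(t)}) - F_2(m,p,c)\delta^{\eta_0(m,p,c)},
\end{equation*}
which is at least $(2r)^{-g(m,p,c)}$ by the inductive lower bound provided $\delta$ is below a threshold depending on $m,p,c,r$ (otherwise we fall into case~(\ref{cs.d2b})). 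The outer loop terminates in at most $1/\delta$ steps because $\alpha^{(t)}$ cannot exceed $1$, and iterated composition via Lemma~\ref{lem.triv}(\ref{pt.comp}) keeps each $P_m^{(t)}$ inside $I_\delta(c\cdot P)$.

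The main obstacle is parameter bookkeeping: ensuring the inner progressions satisfy both $P_i\subset I_1(P_{i+1})$ (the theorem's conclusion) and $P_i\subset I_\delta(c\cdot P_m)$ (the lemma's hypothesis) simultaneously at every level, which uses $I_\delta(c\cdot Q)\subset I_1(Q)$ when $\delta\leq 1/c$; keeping the colour $C$ fixed throughout the recursion so case~(\ref{pt.count}) closes with the same $C$ on both sides; and tracking the cumulative size loss $|P_0|\geq N^{\exp(-\delta^{-O_{m,p,c}(1)})}$ through the $O(1/\delta)$ density-increment iterations at each of the $m$ nested induction levels. Once these are in place the conclusion follows from the density-increment/counting dichotomy of Lemma~\ref{lem.count} together with the elementary geometry of Lemma~\ref{lem.triv}.
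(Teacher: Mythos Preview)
Your overall architecture---induction on $m$ with an outer density-increment loop driven by Lemma~\ref{lem.count}---matches the paper's. The substantive difference is the quantity you track: you fix a single colour $C$ and follow its density $\alpha^{(t)}$ on $c\cdot P_m^{(t)}$, whereas the paper tracks the potential
\[
\mu_j \;=\; \sum_{C'\in\mathcal{C}}\max_{y:\,y+P^{(j)}\subset\N}\E_{x\in c\cdot(y+P^{(j)})}1_{C'}(x),
\]
allowing the colour produced by the inductive hypothesis to change from one iteration to the next. In the paper's scheme the density boost from case~(\ref{pt.harder}) goes to whichever $C$ the box for $m$ happened to return, while the remaining colours lose only $O(r\delta^2)$ by an averaging argument; this is why $\mu_j$ still climbs by $\tfrac12\delta$. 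The paper also carries an extra sub-case (equation~(\ref{eqn.falssup}) inside case~(\ref{pt.count})) precisely to reconcile the density on $c\cdot P_m$ with the density $\alpha$ on $c\cdot P_{m+1}$, a step your fixed-colour approach would let you skip.

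The gap in your version is the strengthened hypothesis itself. You assume $C$ has density $\ge 1/r$ on $c\cdot P$, but the colouring $\mathcal{C}$ only covers $I_1(P)\cap\N$, and $c\cdot P$ is a translate of a $c$-dilate of $P$---in general most of $c\cdot P$ lies outside $I_1(P)\cap\N$, so no colour need have density $\ge 1/r$ there and your ``follows by pigeonhole'' fails. More seriously, even granting the hypothesis, your base case does not go through: $Q_{0,p,c}(C;P_0)=\E_{s_0\in P_0}1_C(cs_0)$ is the density of $C$ on $c\cdot P_0$, and $c\cdot P_0$ is a short progression sitting near $0$ inside $I_1(P)\cap\N$, disjoint in general from $c\cdot P$. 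A global density bound on $c\cdot P$ gives no control over this local density, and your proposed ``density-increment without the counting term'' has nothing to increment from if $C$ happens to miss $c\cdot P_0$ entirely. The same issue recurs at the start of the inductive step: you need $P_m^{(0)}$ both to lie in $I_\delta(c\cdot P)\cap\N$ and to carry density $\ge 1/r$ of $C$ on $c\cdot P_m^{(0)}$, and neither initialisation $P_m^{(0)}=P$ nor $P_m^{(0)}\subset I_\delta(c\cdot P)$ delivers both. The paper's sum-over-colours potential is engineered exactly to sidestep this: it starts at $\mu_0\ge 1$ automatically because $\mathcal{C}$ is a cover, and it rises no matter which colour the box hands back at each step.
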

We shall proceed by a double induction.  The outer induction will be on $m$ and the inner is a density increment argument.
\begin{lemma}[Iteration Lemma]\label{lem.iil}
Suppose that Theorem \ref{thm.key} holds for some $m \in \N_0$, \emph{i.e.}

\begin{center}\fbox{\begin{minipage}{35em}There are monotone functions $F^{(m)}:\N^2\times (0,1] \rightarrow \N$, $\eta^{(m)}_0:\N^3\rightarrow (0,1]$ and $\eta^{(m)}_1:\N^2 \times (0,1] \rightarrow (0,1]$ such that the following holds.  For any $p,c,r \in \N$, $\delta \in (0,1]$, $P\subset \N$ an arithmetic progressions of odd length $N$, and $r$-colouring $\mathcal{C}$ of $I_1(P)\cap \N$ at least one of the following holds.
\begin{enumerate}
\item \label{cs.n3} $N \leq F^{(m)}(p,c,\delta)$;
\item \label{cs.d2} $\delta \geq \eta_0^{(m)}(p,c,r)$;
\item there are progressions $P_0,\dots, P_m \subset I_\delta(c\cdot P)\cap \N$ of odd length with
\begin{equation*}
P_i \subset I_1(P_{i+1}) \text{ for all }0 \leq i \leq m-1 \text{ and } |P_0| \geq N^{\eta_1^{(m)}(p,c,\delta)},
\end{equation*}
and some $C \in \mathcal{C}$ such that
\begin{equation*}
Q_{m,p,c}(C;P_0,\dots,P_m)\geq \eta_0^{(m)}(p,c,r).
\end{equation*}
\end{enumerate}\end{minipage}}\end{center}

Then for any $p,c,r \in \N$, $\delta\in (0,1]$, $P\subset \N$ an arithmetic progressions of odd length $N$, and $r$-colouring $\mathcal{C}$ of $I_1(P)\cap \N$ at least one of the following holds.
\begin{enumerate}
\item \label{cs.n4}
\begin{equation*}
N \leq \min\left\{F^{(m)}(p,c,\delta),\exp(\delta^{-O_{m,p,c}(1)}\eta_1^{(m)}(p,c,\delta)^{-1})\right\}
\end{equation*}
\item \label{cs.d5}
\begin{equation*}
\delta \geq\left(\frac{\eta_0^{(m)}(p,c,r)}{2r}\right)^{O_{m,p,c}(1)};
\end{equation*}
\item \label{cs.inc} or there is a progression $P'''$ of odd length with
\begin{equation*}
I_1(P''') \subset I_1(P) \text{ and } |P'''| \geq N^{\delta^{O_{m,p,c}(1)} \eta_1^{(m)}(p,c,\delta)}
\end{equation*}
such that
\begin{equation*}
\sum_{C \in \mathcal{C}}{\max_{y: y+ P''' \subset \N}{\E_{x \in c\cdot (y+P''')}{1_C(x)}}} \geq \sum_{C \in \mathcal{C}}{\max_{y: y+ P \subset \N}{\E_{x \in c\cdot (y+P)}{1_C(x)}}}+ \frac{1}{2}\delta;
\end{equation*}
\item \label{cs.ct} there are progressions $P_0,\dots, P_{m+1} \subset I_\delta(c\cdot P)\cap \N$ of odd length with
\begin{equation*}
P_i \subset I_1(P_{i+1}) \text{ for all }0 \leq i \leq m \text{ and } |P_0| \geq N^{\eta_1^{(m)}(p,c,\delta)},
\end{equation*}
and some $C \in \mathcal{C}$ such that
\begin{equation*}
Q_{m+1,p,c}(C;P_0,\dots,P_{m+1})\geq \left(\frac{\eta_0^{(m)}(p,c,r)}{2}\right)^{O_{m,p,c}(1)}.
\end{equation*}
\end{enumerate}
\end{lemma}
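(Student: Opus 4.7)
The argument is a one-step density-increment dichotomy built on top of Lemma~\ref{lem.count}.  Apply the inductive hypothesis to $\mathcal{C}$ and $P$ with a dampened parameter $\delta_0:=\delta^K$ for $K=O_{m,p,c}(1)$ chosen large enough for the constructions below.  If the hypothesis returns case~(\ref{cs.n3}) or~(\ref{cs.d2}), we land directly in case~(\ref{cs.n4}) or~(\ref{cs.d5}) respectively after absorbing constants.  Otherwise we obtain a colour class $C\in\mathcal{C}$ and a chain $P_0,\dots,P_m\subset I_{\delta_0}(c\cdot P)\cap\N$ with $P_i\subset I_1(P_{i+1})$ for $i<m$, $|P_0|\ge N^{\eta_1^{(m)}(p,c,\delta_0)}$, and $Q_{m,p,c}(C;P_0,\dots,P_m)\ge\eta_0:=\eta_0^{(m)}(p,c,r)$.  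Since the form $L_{i^*(c,t)}$ is the single-coordinate projection $s\mapsto cs_t$, positivity of $Q_{m,p,c}(C;\dots)$ yields the auxiliary density bound $\E_{x\in c\cdot P_t}{1_C(x)}\ge\eta_0$ for every $0\le t\le m$.

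Next, construct $P_{m+1}\subset I_\delta(c\cdot P)\cap\N$ of odd length with $P_m\subset I_1(P_{m+1})$, taking $P_{m+1}$ to share the common difference of $P_m$, be centred at $x_{P_m}$, and have radius just large enough for the $I_1$-containment so that in fact $P_m\subset P_{m+1}$ as sets.  The cushion between $\delta_0$ and $\delta$ guarantees $P_{m+1}$ fits in $I_\delta(c\cdot P)\cap\N$ via Lemma~\ref{lem.triv}.  Set $\alpha:=\E_{x\in c\cdot P_{m+1}}{1_C(x)}$ and let $P''$ be a centred progression of odd length inside $I_{\delta_0}(P_0)$.

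Apply Lemma~\ref{lem.count} with $m$ replaced by $m+1$ to $C$ and $P_0,\dots,P_{m+1}$ together with $P''$, at a parameter $\delta^*$ equal to a fixed power of $\delta$ chosen to satisfy~(\ref{eqn.hyp1}) and~(\ref{eqn.hyp2}).  Each outcome feeds a case of the conclusion.  If $|P''|$ is small then $|P_0|\le 3\delta_0^{-1}|P''|$ is small by Lemma~\ref{lem.triv}(\ref{pt.trivsym}), so $N$ is small through $|P_0|\ge N^{\eta_1^{(m)}(p,c,\delta_0)}$, giving case~(\ref{cs.n4}).  If $\delta^*$ is large the dependence unwinds to case~(\ref{cs.d5}).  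In the density-increment outcome Lemma~\ref{lem.count} provides $P'''$ with $\E_{x\in c\cdot P'''}{1_C(x)}\ge\alpha+\delta^*$ and $I_1(c\cdot P''')\subset I_{F_2(\delta^*)^2}(c\cdot P_{m+1})$; the latter forces $|P'''|/|P|=O_{m,p,c}(\delta^{O(1)})$.  A standard averaging argument then gives $\max_{y:y+P'''\subset\N}{\E_{x\in c\cdot(y+P''')}{1_{C'}(x)}}\ge\beta_{C'}-O(|P'''|/|P|)$ for every colour $C'$, where $\beta_{C'}:=\max_{y:y+P\subset\N}{\E_{x\in c\cdot(y+P)}{1_{C'}(x)}}$, and combining with the direct gain $\ge\delta^*$ on $C$ yields the required $\delta/2$ increment of $\sum_{C'\in\mathcal{C}}\max_{y}{\E_{x\in c\cdot(y+P''')}{1_{C'}(x)}}$, placing us in case~(\ref{cs.inc}).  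In the count outcome $|Q_{m+1,p,c}(C;P_0,\dots,P_{m+1})-\alpha^{k_m}Q_{m,p,c}(C;P_0,\dots,P_m)|\le F_2(m+1,p,c)(\delta^*)^{\eta_0(m+1,p,c)}$ for a constant $k_m=|\mathcal{D}_{p,c;m+1}|$; with $Q_m\ge\eta_0$ and the lower bound $\alpha\ge(|P_m|/|P_{m+1}|)\eta_0$ coming from $P_m\subset P_{m+1}$, this yields $Q_{m+1}\ge(\eta_0/O(1))^{O_{m,p,c}(1)}$, placing us in case~(\ref{cs.ct}).

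The principal obstacle is the conversion in the density-increment outcome: to lift a single-colour gain into a sum-of-max gain one needs $\alpha$ not far below $\beta_C$.  When $\beta_C$ much exceeds $\alpha$, we fall back on $P''':=P_m$ (which has $I_1(P''')\subset I_1(P)$ provided $\delta_0c\le1$), use the bound $\E_{x\in c\cdot P_m}{1_C(x)}\ge\eta_0$ directly, and get the sum-of-max increment whenever $\eta_0-\beta_C\ge\delta/2+O(r\delta_0)$; the complementary case $\beta_C$ close to $\eta_0$ then forces the density drop between $c\cdot P_m$ and $c\cdot P_{m+1}$ to be small, so $\alpha$ is close to $\beta_C$ and Lemma~\ref{lem.count}'s gain is directly usable.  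The analogous subtlety in the count outcome is ensuring $|P_{m+1}|/|P_m|$ is controlled; when it is not, we route through the density-increment branch via $P''':=P_m$.  The rest is routine bookkeeping of the parameters $\delta_0,\delta,\delta^*$ and of the containments via Lemma~\ref{lem.triv}.
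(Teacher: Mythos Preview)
Your overall architecture (apply the box, append a $P_{m+1}$, invoke Lemma~\ref{lem.count}, and parse its four outcomes) matches the paper's. The essential divergence is your choice of $P_{m+1}$, and that choice is where the argument breaks.

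The paper takes $P_{m+1}:=y_0+P$, the translate of $P$ on which the density of the distinguished colour $C$ is \emph{maximised}. This forces $\alpha=\E_{x\in c\cdot P_{m+1}}1_C(x)=\beta_C$ by construction. In the density-increment outcome one then has $\E_{x\in c\cdot P'''}1_C(x)\ge\alpha+\delta=\beta_C+\delta$, so the gain on $C$ plugs straight into the sum-of-maxes; the $r-1$ averaging losses on the other colours are absorbed by taking $\delta$ small relative to $r^{-1}$. In the count outcome, the paper first tests whether $\max_y\E_{x\in c(y+P_m)}1_C(x)$ already beats $\alpha=\beta_C$ by a margin (your fallback $P''':=P_m$); if not, it deduces $\alpha\ge\eta_0-O_{m,p,c}(r\delta)$, which suffices for the lower bound on $Q_{m+1}$.

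Your $P_{m+1}$ is instead a small dilate of $P_m$, so $\alpha$ is tied to the density of $C$ on $c\cdot P_m$, not to $\beta_C$. The dichotomy you propose to repair this is not exhaustive. The fallback $P''':=P_m$ gives an increment only when $\E_{x\in c\cdot P_m}1_C(x)-\beta_C\ge\tfrac12\delta+O(r\delta_0)$; since you only know $\E_{x\in c\cdot P_m}1_C(x)\ge\eta_0$, your stated sufficient condition becomes $\eta_0-\beta_C\ge\tfrac12\delta+O(r\delta_0)$. The complement of this is $\beta_C>\eta_0-\tfrac12\delta-O(r\delta_0)$, which is merely a \emph{lower} bound on $\beta_C$, not the assertion ``$\beta_C$ close to $\eta_0$'' that you then invoke. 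Nothing prevents $\beta_C$ from being close to $1$ while $\E_{x\in c\cdot P_m}1_C(x)$ (and hence $\alpha$) sits near $\eta_0\sim r^{-O(1)}$; in that regime neither your fallback nor the direct use of Lemma~\ref{lem.count}'s gain closes the $\beta_C-\alpha$ gap, and the sentence ``forces the density drop between $c\cdot P_m$ and $c\cdot P_{m+1}$ to be small, so $\alpha$ is close to $\beta_C$'' has no justification. The same obstruction recurs in the count outcome: routing through $P''':=P_m$ again requires $\E_{x\in c\cdot P_m}1_C(x)>\beta_C+\tfrac12\delta$, which you cannot guarantee.

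The fix is exactly the paper's: take $P_{m+1}$ to be the density-maximising translate of $P$. This makes $\alpha=\beta_C$ identically and dissolves the obstacle you correctly identified. (As a side benefit, it also removes the need for your dampened parameter $\delta_0$ and the somewhat delicate construction ensuring $P_m\subset I_1(P_{m+1})$ with matched common difference, which as written requires $d_{P_m}\mid x_{P_m}$ and may force $P_{m+1}\not\subset\N$.)
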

\begin{proof}
Apply the content of the box to $P$ to get that either we are in case (\ref{cs.n3}) or (\ref{cs.d2}) of the hypothesis and so in case (\ref{cs.n4}) and (\ref{cs.d5}) respectively of the present lemma, or else there are progressions $P_0,\dots,P_m \subset I_\delta(c\cdot P)\cap \N$ with $P_i \subset I_1(P_{i+1})$ for all $0 \leq i \leq m-1$, and some $C \in \mathcal{C}$ with
\begin{equation*}
Q_{m,p,c}(C;P_0,\dots,P_m) \geq \eta_0^{(m)}(p,c,r) \text{ and }|P_0| \geq N^{\eta_1^{(m)}(p,c,\delta)}.
\end{equation*}
Let $y_0 \in \Z$ be such that $y_0+P \subset \N$ and
\begin{equation}\label{eqn.max}
\E_{x \in c\cdot (y_0+P)}{1_C(x)}=\max_{y: y+ P \subset \N}{\E_{x \in c\cdot (y+P)}{1_C(x)}},
\end{equation}
and let $P_{m+1}=y_0+P$.  Then $P_0,\dots,P_{m+1}$ are arithmetic progressions of odd length. Furthermore, by Lemma \ref{lem.triv} (and assuming we are not in case (\ref{cs.d5})) we have
\begin{equation*}
P_i \subset I_\delta(c\cdot P_m) \subset I_\delta(c\cdot I_\delta(c\cdot P)) \subset I_\delta(c\cdot P_{m+1}) \text{ for all } 0 \leq i \leq m-1,
\end{equation*}
and
\begin{equation*}
P_m \subset I_\delta(c\cdot P) = I_\delta(c\cdot P_{m+1}).
\end{equation*}
It follows that we can apply Lemma \ref{lem.count} with parameters $m+1, p, c \in \N$ and $\delta\in (0,1]$, set $C$, and odd length arithmetic progressions $P_0,\dots,P_{m+1} \subset \Z$, and
\begin{equation*}
P'':=I_\delta(P_0) \subset I_\delta(P_i) \text{ for all }0 \leq i \leq m+1.
\end{equation*}
We have four cases.
\begin{enumerate}
\item \emph{(Case ({\ref{pt.smallN}}))} Then
\begin{equation*}
\delta N^{\eta_1^{(m)}(p,c,\delta)} \leq |I_\delta(P_0)| = |P''| \leq \exp(\delta^{-F_2(m+1,p,c)}),
\end{equation*}
and we are in case (\ref{cs.n4}) of this lemma. 
\item \emph{(Case ({\ref{pt.ldelta}}))} 
\begin{equation*}
\delta\geq \eta_0(m+1,p,c)
\end{equation*}
and we are in case (\ref{cs.d5}) of the lemma.
\item \emph{(Case ({\ref{pt.harder}})} Then there is an arithmetic progression $P''' \subset \N$ of odd length with $I_1(c\cdot P''') \subset I_{F_2(m+1,p,c)\delta^2}(c\cdot P)$ such that
\begin{equation*}
|P'''| \geq N^{\eta_1^{(m)}(p,c,\delta)\delta^{O_{m,p,c}(1)}} \text{ and } \E_{x \in c\cdot P'''}{1_C(x)} \geq \E_{x \in c\cdot P_{m+1}}{1_C(x)} + \delta.
\end{equation*}
(Then either we are in case (\ref{cs.d5}) of the lemma or else $I_1(P''') \subset I_1(P)$.)  In view of the choice of $y_0$ (\ref{eqn.max}) and the definition of $P_{m+1}$ the second expression tells us that
\begin{equation*}
\max_{w:w+P''' \subset \N}{\E_{z \in c\cdot (w+P''')}{1_{C}(x)}}\geq \E_{x \in c\cdot P'''}{1_C(x)} \geq \max_{y:y+P\subset \N}{\E_{x \in c\cdot (y+P)}{1_C(x)}} + \delta.
\end{equation*}
For (the other) $C' \in \mathcal{C}$ and $y \in \Z$ such that $y+P \subset \N$ we have, by Lemma \ref{lem.triv}, that
\begin{align*}
\max_{w:w+P''' \subset \N}{\E_{z \in c\cdot (w+P''')}{1_{C'}(z)}}& \geq \E_{x \in c\cdot P}{\E_{z \in c\cdot (x+y+P''')}{1_{C'}(z)}}\\ & =\E_{u \in c\cdot (y+P)}{\E_{z \in c\cdot P'''}{\tau_z(1_{C'})(u)}}\\ &\geq \E_{x \in c\cdot (y+P)}{1_{C'}(x)} - F_2(m+1,p,c)\delta^2.
\end{align*}
Taking the maximum over $y$ such that $y+P \subset \N$ and summing it follows that
\begin{align*}
& \sum_{C' \in \mathcal{C}}{ \max_{w:w+P''' \subset \N}{\E_{z \in c\cdot (w+P''')}{1_{C'}(z)}}}\\
&\qquad \qquad \geq\sum_{C' \in \mathcal{C}}{ \max_{w:w+P \subset \N}{\E_{z \in c\cdot (w+P)}{1_{C'}(z)}}} +\delta - (r-1)F_2(m+1),p,c)\delta^2.
\end{align*}
So we are either in case (\ref{cs.d5}) of the lemma or (\ref{cs.inc}) of the lemma.
\item \emph{(Case ({\ref{pt.count}}))} Then 
\begin{align}
\label{eqn.ss}& \left|Q_{m+1,p,c}(C;P_0,\dots,P_{m+1})\right.\\
\nonumber & \qquad \qquad \left. - \alpha^{|\mathcal{D}_{m+1,p,c}|}Q_{m,p,c}(C;P_0,\dots,P_{m})\right|\leq F_2(m+1,p,c)\delta^{\eta_0(m+1,p,c)},
\end{align}
where $\alpha:=\E_{x \in c\cdot P_{m+1}}{1_C(x)}$. First, suppose that
\begin{equation}\label{eqn.falssup}
 \max_{y:y+P_m \subset \N}{\E_{x \in c\cdot (y+P_m)}{1_C(x)}}>\alpha + ((r-1)F_2(m+1,p,c)+1)\delta
\end{equation}
and so by (\ref{eqn.max}) and the definition of $P_{m+1}$ we have
\begin{align*}
 \max_{y:y+P_m \subset \N}{\E_{x \in c\cdot (y+P_m)}{1_C(x)}}&>\max_{y:y+P \subset \N}{\E_{x \in y+P}{1_C(x)}}\\ &\qquad \qquad+((r-1)F_2(m+1,p,c)+1)\delta.
\end{align*}
For the other $C' \in \mathcal{C}$, we use that $P_m \subset I_\delta(c\cdot P_{m+1})=I_\delta(c\cdot P)$, and Lemma \ref{lem.triv} to give that for any $y+P \subset \N$ we have
\begin{align*}
\max_{w:w+P_m \subset \N}{\E_{z \in c\cdot (w+P_m)}{1_{C'}(z)}}& \geq \E_{x \in c\cdot (y+P)}{\E_{z \in c\cdot (x+y+P_m)}{1_{C'}(z)}}\\ & =\E_{u \in c\cdot (y+P)}{\E_{z \in c\cdot P'''}{\tau_z(1_{C'})(u)}}\\ &\geq \E_{x \in c\cdot (y+P)}{1_{C'}(x)} - F_2(m+1,p,c)\delta.
\end{align*}
Taking the maximum over $y$ such that $y+P \subset \N$ and summing it follows that
\begin{align*}
& \sum_{C' \in \mathcal{C}}{ \max_{w:w+P_m \subset \N}{\E_{z \in c\cdot (w+P_m)}{1_{C'}(z)}}}\\
&\qquad \qquad \geq\sum_{C' \in \mathcal{C}}{ \max_{w:w+P \subset \N}{\E_{z \in c\cdot (w+P)}{1_{C'}(z)}}} +\delta.
\end{align*}
We are in case (\ref{cs.inc}).  (We should also note that we are in case (\ref{cs.d5}) of the lemma or else $I_1(P_m) \subset I_1(P)$.)  We conclude that (\ref{eqn.falssup}) does not hold and so
\begin{align*}
& \alpha+ ((r-1)F_2(m+1,p,c)+1)\delta\\  &\qquad \qquad \geq  \max_{y:y+P_m \subset \N}{\E_{x \in c\cdot (y+P_m)}{1_C(x)}}\\
 & \qquad \qquad \geq \E_{x \in c\cdot P_m}{1_C(x)} \geq Q_{m,p,c}(C;P_0,\dots,P_m) \geq \eta_0^{(m)}(p,c,r).
\end{align*}
Either
\begin{equation*}
\delta \geq \frac{\eta_0^{(m)}(p,c,r)}{2((r-1)F_2((m+1),p,c)+1)},
\end{equation*}
and we are in case (\ref{cs.d5}) of the lemma; or from (\ref{eqn.ss}) we have
\begin{equation*}
F_2(m+1,p,c)\delta^{\eta_0(m+1,p,c)} \geq \frac{1}{2}\left(\frac{1}{2}\eta_0^{(m)}(p,c,r)\right)^{|\mathcal{D}_{m+1,p,c}|+1}
\end{equation*}
and we are in case (\ref{cs.d5}) of the lemma; or we are in case (\ref{cs.ct}) of the lemma.
\end{enumerate}
The lemma is proved.
\end{proof}

\begin{proof}[Proof of Theorem \ref{thm.key}]
We proceed by induction on $m$ to show that the content of the box in Lemma \ref{lem.iil} holds.  This gives the theorem.  The result holds for $m=0$ and it is convenient to use that as the base case.  To see this take $P_0:=I_\delta(c\cdot P)\cap \N$.  If $\delta N <2$ then we are in case (\ref{cs.n3}) of the box.  If not then $|P_0| =\Omega(\delta N)$, and again we are either in case (\ref{cs.n3}), or $|P_0|$ satisfies the required lower bound.  Finally, we are in case (\ref{cs.d2}) or else $I_\delta(c^2\cdot P) \subset I_1(P)$ and
\begin{equation*}
\sum_{C \in \mathcal{C}}{Q_{0,p,c}(C;P_0)} = \sum_{C \in \mathcal{C}}{\E_{s_0 \in P_0}{1_C(cs_0)}} \geq  \E_{s_0 \in P_0}{1_{I_1(P)\cap \N}(cs_0)} =1.
\end{equation*}
The result follows by averaging.  Now, suppose we have proved that the content of the box holds for some $m$.

We proceed iteratively defining progressions $P^{(0)},P^{(1)},\dots$ with $I_{1}( P^{(j+1)}) \subset I_1(P^{(j)})$ for all $j \geq 0$.  Begin with $P^{(0)}:=I_1(P)\cap \N$ and define
\begin{equation*}
\mu_j:=\sum_{C \in \mathcal{C}}\max_{y:y+P^{(j)} \subset \N}{\E_{z \in y+P^{(j)}}{1_C(z)}}.
\end{equation*}
By hypothesis we have $\mu_0 \geq 1$ and we also have $\mu_j \leq r$ for all $j$.  At stage $j \in \N_0$ we apply Lemma \ref{lem.iil} to $P^{(j)}$ and unless we are in case (\ref{cs.inc}) we terminate.  If we are in case (\ref{cs.inc}) then we let $P^{(j+1)}$ be the progression given, which has
\begin{equation*}
I_1(P^{(j+1)}) \subset I_1(P^{(j)}), |P^{(j+1)}| \geq |P^{(j)}|^{\delta^{O_{m,p,c}(1)}\eta_1^{(m)}(p,c,\delta)} \text{ and }\mu_{j+1} \geq \mu_j + \frac{1}{2}\delta.
\end{equation*}
In view of the last fact this iteration can proceed for at most $2\delta^{-1}$ steps before terminating.  When it terminates we have either
\begin{equation*}
N^{\delta^{-O_{m,p,c}(\delta^{-1})}(\eta_1^{(m)}(p,c,\delta))^{2\delta^{-1}}} \leq \min \left\{F^{(m)}(p,c,\delta),\exp(\delta^{-O_{m,p,c}(1)}\eta_1^{(m)}(p,c,\delta)^{-1})\right\};
\end{equation*}
or
\begin{equation*}
\delta \geq \left(\frac{\eta_0^{(m)}(p,c,r)}{2r}\right)^{O_{m,p,c}(1)};
\end{equation*}
or there is some $C \in \mathcal{C}$ such that
\begin{equation*}
Q_{m+1,p,c}(C;P_0,\dots,P_{m+1}) \geq \left(\frac{\eta_0^{(m)}(p,c,r)}{2}\right)^{O_{m,p,c}(1)} \text{ and } |P_0| \geq N^{\delta^{-O_{m,p,c}(\delta^{-1})}(\eta_1^{(m)}(p,c,\delta))^{2\delta^{-1}}}.
\end{equation*}
It follows that we can take
\begin{equation*}
F^{(m+1)}(p,c,\delta) \leq (2F^{(m)}(p,c,\delta))^{\eta_1^{(m)}(p,c,\delta)^{-O_{m,p,c}(\delta^{-1})}};
\end{equation*}
\begin{equation*}
\eta_1^{(m+1)}(p,c,\delta) \geq \left(\frac{\eta_1^{(m)}(p,c,\delta)}{2}\right)^{O_{m,p,c}(\delta^{-1})};
\end{equation*}
and
\begin{equation*}
\eta_0^{(m+1)}(p,c,r) \geq \left(\frac{\eta_0^{(m)}(p,c,r)}{2r}\right)^{O_{m,p,c}(1)}.
\end{equation*}
These recursions give the claimed bounds.
\end{proof}

\begin{proof}[Proof of Theorem \ref{thm.main}]
We apply Theorem \ref{thm.key} with $P=[N]$ (or $P=[N-1]$ if $N$ is even) with $\frac{1}{2c} \geq \delta = r^{-O_{m,p,c}(1)}$ such that case (\ref{cs.d2b}) never holds.  If the colouring contains no $(m,p,c)$-set then $Q_{m,p,c}(C;P_0,\dots,P_m)=0$ for any $P_0,\dots,P_m$ of form described in case (\ref{cs.conc}) and so that does not happen.  We conclude that $N$ is bounded in a way that yields the result.
\end{proof}

As a final remark, although we have made no effort to track the $m$, $p$, and $c$ dependencies they should also not be too bad given the known bounds in Theorem \ref{thm.gvn} and Theorem \ref{thm.gi}.

\section*{Acknowledgements} The author should like to thank David Conlon and Julia Wolf for useful conversations, Jonathan Chapman and Sean Prendiville for providing an early copy of \cite{chapre::}, and the referees for careful reading of the paper including the example of how the main argument works.

\bibliographystyle{halpha}

\bibliography{references}

\end{document}